\documentclass[11pt,reqno]{amsart}
\usepackage{etoolbox}
\makeatletter
\patchcmd{\@sect}
  {\@hangfrom{\hskip #3\relax\@svsec}{\interlinepenalty\@M #8\par}}
  {\ifnum#2=\@ne
     \centering\interlinepenalty\@M\hskip #3\relax\@svsec#8\par
   \else
     \@hangfrom{\hskip #3\relax\@svsec}{\interlinepenalty\@M #8\par}
   \fi}
  {}{\PackageError{manuscript}{Section heading patch failed}{}}
\makeatother
\usepackage[a4paper,textwidth=154mm,hcentering,top=28mm,bottom=28mm]{geometry}
\usepackage{amsmath,amssymb,amsthm,booktabs,hyperref}
\hypersetup{colorlinks=true,urlcolor=blue,linkcolor=blue,citecolor=blue,filecolor=blue,pdfborder={0 0 0}}
\numberwithin{equation}{section}
\newtheorem{theorem}{Theorem}[section]
\newtheorem{lemma}[theorem]{Lemma}
\newtheorem{proposition}[theorem]{Proposition}
\newtheorem{corollary}[theorem]{Corollary}
\theoremstyle{definition}

\theoremstyle{remark}

\theoremstyle{definition}
\newtheorem{question}[theorem]{Question}
\DeclareMathOperator{\tr}{tr}
\DeclareMathOperator{\Var}{Var}

\DeclareMathOperator{\TV}{TV}
\DeclareMathOperator{\KL}{KL}
\DeclareMathOperator{\cum}{cum}
\title[The spectral conjecture and three detection scales]{Counterexamples to the Mao--Wu--Xu spectral conjecture and a three-scale boundary for spherical random graphs}
\author{Congyi Luo}
\address{School of Data Science, Fudan University, Shanghai 200433, China}
\email{cyluo24@m.fudan.edu.cn}
\keywords{Random geometric graphs, spectral conjecture, detection boundary, spherical harmonics, low-degree likelihood ratio}
\date{}
\begin{document}
\begin{abstract}
Random geometric graph detection asks whether a graph formed by sampling $n$ independent uniform points on $S^{d-1}$ and connecting them randomly according to their inner products can be distinguished, from its adjacency matrix alone, from an Erd\H{o}s--R\'enyi graph with the same edge density.
Mao, Wu, and Xu conjectured that the cubic trace of the standardized connection kernel determines the detection boundary: the limits zero and infinity of $n^3(\tr K^3)^2$ should correspond to impossibility and strong detection, respectively.
We construct counterexamples and prove a uniform result for spherical polynomial kernels of uniformly bounded degree whose mean edge density stays away from zero and one.
If $K$ is the integral operator of the standardized centered kernel, let
\[
 n_* =\min\left\{
 \frac{1}{|\tr K^3|^{2/3}},\quad
 \frac{1}{\sqrt{\tr K^4}},\quad
 \frac{d}{\tr K^2}
 \right\},
\]
where a term with zero denominator is $+\infty$.
The total variation distance between the geometric and independent-edge graphs tends to zero when $n/n_*\to0$, and to one when $n/n_*\to\infty$.
The three scales correspond to triangles, four-cycles, and global geometry.
In particular, the pure degree-four spherical harmonic kernel has only positive nonzero eigenvalues, yet its detection scale is $n\asymp d^5$.
Throughout $d^5\ll n\ll d^{16/3}$, the triangle and four-cycle signals and every fixed-degree standardized polynomial mean gap vanish, while the full graph remains strongly detectable.
Thus the failure of the cubic-trace conjecture is not solely due to spectral sign cancellation.
The impossibility proof combines higher-order spherical integration by parts, a graph expansion of cumulants, and a finite-order relative entropy comparison.
\end{abstract}
\maketitle
\section{Introduction}\label{sec:intro}
Mao, Wu, and Xu \cite{MWX} proposed a spectral conjecture for random geometric graphs:
the cubic trace of the standardized connection kernel should determine whether the geometric graph can be distinguished from an Erd\H{o}s--R\'enyi graph with the same edge density.
The conjecture reduces a statistical question concerning the entire adjacency matrix to a single spectral quantity.
We show that the cubic trace alone does not describe general sequences of spherical kernels,
and establish a detection boundary consisting of three explicit scales for dense polynomial kernels of uniformly bounded degree.

\subsection{Detection and the Mao--Wu--Xu spectral conjecture}
Let $X_1,\ldots,X_n$ be independent and uniform on the unit sphere $S^{d-1}\subset\mathbb R^d$, where $d\ge2$.
Conditional on these latent points, generate undirected edges independently according to
\[
 A_{ij}\mid X_1,\ldots,X_n
 \sim\operatorname{Bernoulli}\bigl(W(\langle X_i,X_j\rangle)\bigr),\qquad i<j,
\]
where $W:[-1,1]\to[0,1]$ is the connection function.
Write $P_{n,d,W}$ for the graph distribution and set
$p=\mathbb EW(\langle X_1,X_2\rangle)$.
Only the adjacency matrix is observed. We compare $P_{n,d,W}$ with $G(n,p)$;
the parameters of both models are known, whereas the latent points are unobserved.

Throughout, assume $0<p<1$. If $p=0$ or $p=1$, both models are respectively the empty or complete graph and hence have the same distribution.
Let $\mu_d$ be the uniform probability measure on the sphere. Define the standardized kernel and its integral operator by
\begin{equation}\label{eq:normalized-operator}
 \kappa(t)=\frac{W(t)-p}{\sqrt{p(1-p)}},\qquad
 (Kf)(x)=\int\kappa(\langle x,y\rangle)f(y)\,d\mu_d(y).
\end{equation}
The operator $K$ is self-adjoint and annihilates constant functions.
The notation $\tr K^r$ always denotes the operator trace with eigenvalues counted according to multiplicity,
rather than the $r$th power of an individual spherical harmonic coefficient.
We measure distinguishability by the total variation distance
$\TV(P,Q)=\sup_B|P(B)-Q(B)|$.
We call the models undetectable when their total variation tends to zero, and strongly detectable when it tends to one.
The latter is equivalent to the existence of tests whose sum of type I and type II errors tends to zero.

The Mao--Wu--Xu spectral conjecture \cite[Section 2.5, Conjecture 1]{MWX} can be written as
\begin{equation}\label{eq:MWX-conjecture}
 \TV\bigl(P_{n,d,W},G(n,p)\bigr)\longrightarrow
 \begin{cases}
 0,& n^3(\tr K^3)^2\longrightarrow0,\\[2pt]
 1,& n^3(\tr K^3)^2\longrightarrow\infty.
 \end{cases}
\end{equation}
This formulation is motivated by the signed triangle statistic. Let
\[
 Z_{ij}=\frac{A_{ij}-p}{\sqrt{p(1-p)}},\qquad
 S_3=\sum_{i<j<k}Z_{ij}Z_{jk}Z_{ki}.
\]
Taking conditional expectations over the edges and integrating the three latent points gives
\[
 \mathbb E_{P_{n,d,W}}S_3=\binom n3\tr K^3,
 \qquad \mathbb E_{G(n,p)}S_3=0,
 \qquad \Var_{G(n,p)}S_3=\binom n3.
\]
Thus $n^3(\tr K^3)^2$ is the order of the squared triangle mean divided by its null variance.
The question is whether the spectral signal captured by this local statistic determines all the information in the observation.
We address this question for general kernel sequences in which $W$ may depend on the dimension.

\subsection{Main result: three explicit detection scales}
Positive and negative contributions to the cubic trace may cancel, whereas the fourth trace is always nonnegative.
Adding the fourth trace is still insufficient: the spherical geometry shared by all vertices produces a further scale determined by
$\tr K^2$ and the dimension $d$.
The following theorem combines the three scales in a single sample-size formula.

\begin{theorem}\label{thm:main}
Fix an integer $L\ge1$ and $p_0\in(0,1/2]$.
Let $n\to\infty$, $d=d_n\ge2$, and let $W=W_n$ be a real polynomial satisfying
\[
 \deg W\le L,\qquad 0\le W\le1,\qquad p_0\le p\le1-p_0.
\]
Let $K$ be the integral operator in \eqref{eq:normalized-operator}, and define
\begin{equation}\label{eq:sample-scale}
 n_*=
 \min\left\{
 \frac{1}{|\tr K^3|^{2/3}},\quad
 \frac{1}{\sqrt{\tr K^4}},\quad
 \frac{d}{\tr K^2}
 \right\}.
\end{equation}
A term with zero denominator is interpreted as $+\infty$. Then
\begin{equation}\label{eq:main-theorem}
 \TV\bigl(P_{n,d,W},G(n,p)\bigr)\longrightarrow
 \begin{cases}
 0,& n/n_*\longrightarrow0,\\[2pt]
 1,& n/n_*\longrightarrow\infty.
 \end{cases}
\end{equation}
The dimension and all polynomial coefficients may vary with $n$.
\end{theorem}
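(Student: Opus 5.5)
We prove the two directions separately, after first writing the kernel in spherical harmonics. Since $W$ is a polynomial of degree at most $L$, we expand
\[
\kappa(t)=\sum_{k=1}^{L}\lambda_k\, N_{k,d}\, P_{k,d}(t).
\]
Here $P_{k,d}$ are the normalized Gegenbauer polynomials, $N_{k,d}\asymp d^k$ are the multiplicities, and the degree-zero term vanishes because $\kappa$ is centered. Then $\tr K^r=\sum_k N_{k,d}\lambda_k^r$. Because $p$ is bounded away from $0$ and $1$ and $W\in[0,1]$, we have $\|\kappa\|_\infty\lesssim1$. Hence $\tr K^2=\mathbb E\kappa(\langle X_1,X_2\rangle)^2\lesssim1$, and for each $k$ we get $|\lambda_k|\lesssim N_{k,d}^{-1/2}$. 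These a priori bounds, together with the bounded number of harmonic levels, are what make three scales suffice.

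\emph{Strong detection when $n/n_*\to\infty$.} We split into three cases, according to which term realizes the minimum along a subsequence.
\begin{itemize}
\item If $n^{3/2}|\tr K^3|\to\infty$, we use the signed triangle count $S_3$. Its mean is $\binom n3\tr K^3$ and its null variance is $\binom n3$. The step that needs work is bounding its variance under $P_{n,d,W}$ by the same graph-expansion computation as in \cite{MWX}; it should be $\binom n3(1+o(1))+O(n^4\tr K^4+n^5(\tr K^3)^2\cdot\text{small})$. We then apply Chebyshev.
\item If $n^2\tr K^4\to\infty$, we use the signed four-cycle count $S_4$. Its mean is of order $n^4\tr K^4$ and its null standard deviation is of order $n^2$, so the same second-moment argument applies.
\item If $n\,\tr K^2/d\to\infty$, we need a global statistic. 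We take the level $k$ carrying a constant fraction of $\tr K^2$. The natural choice is the statistic built from $\sum_{i,j}Z_{ij}$ weighted through $\tr(Z^2)$-type fluctuations, namely $T=\sum_{i\neq j\neq l}Z_{ij}Z_{jl}$ (signed cherries) centered appropriately. For level $k=1$, i.e.\ $\langle X_i,X_j\rangle$, this becomes a Wishart-type deviation: the Gram matrix has $\|XX^\top-I\|$-type fluctuations of order $n/\sqrt d$ in aggregate. I would compare the mean shift of the degree-variance statistic $\sum_i(\sum_j Z_{ij})^2$, whose excess is about $n^3\cdot\mathbb E[\kappa\kappa]$ over the three-point geometry and scales like $n^3\tr K^2/d$, with its null standard deviation $\asymp n^{2}$, using the hypercontractive fluctuations of $\sum_j\kappa(\langle X_i,X_j\rangle)$. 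If this fails for higher levels, I would use a polynomial in the Gram matrix that captures the fact that $\sum_{j}P_{k}(\langle X_i,X_j\rangle)$ has variance inflated by a factor $1+n/N_{k}$.
\end{itemize}

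\emph{Impossibility when $n/n_*\to0$.} We bound $\KL$ or the $\chi^2$ distance, but not naively, since the $\chi^2$ distance can blow up through the global geometry. The plan is as follows.
\begin{enumerate}
\item Condition on the latent points. The likelihood ratio is $\mathbb E_X\prod_{i<j}(1+\sigma Z_{ij}\kappa_{ij})$ with $\sigma=\sqrt{p(1-p)}/p$-type factors. We expand it as a sum over graphs $H$ of $\prod_{e\in H}Z_e$ times the moment $\mathbb E\prod_{e\in H}\kappa_e$.
\item Use a finite-order relative entropy comparison. We compare $P$ to $G(n,p)$ via the cumulant expansion of $\log$ of the likelihood ratio. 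We show that the contributions of cycles of length $3$ and $4$ are controlled by $n^3(\tr K^3)^2$ and $n^4(\tr K^4)^2\le(n^2\tr K^4)^2$, and that longer cycles are dominated by these through $\tr K^r\le(\tr K^4)^{r/4}$ for $r\ge4$.
\item Control the non-cycle graphs, i.e.\ trees attached to cycles and general connected graphs, through joint cumulants of the $\kappa_e$. This uses higher-order spherical integration by parts: each independent vertex attached to the geometry through at least two harmonics yields a factor $1/d$ relative to the product of $\tr K^2$ terms. The result should be that the total contribution is bounded by a power of $n\tr K^2/d$.
\end{enumerate}
The main obstacle is step 3: obtaining uniform cumulant bounds on $\mathbb E\prod_{e\in H}\kappa_e$ for arbitrary graphs $H$ with a bounded number of edges, in terms of $d$ and the traces. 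We also need a truncation showing that graphs with many edges contribute negligibly to the finite-order entropy. I would attempt this by induction on the vertices, integrating out one vertex at a time with the Funk--Hecke formula and the Gegenbauer product (linearization) coefficients, whose $d$-dependence is explicit for bounded $L$.
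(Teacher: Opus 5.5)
Both directions of your proposal have a genuine gap.

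\emph{Detection at the geometric scale.} Your global statistic carries no signal. Distinct edges are uncorrelated under $P_{n,d,W}$ even when they share a vertex. Integrating out $X_l$ in $\mathbb E[\kappa(\langle X_i,X_j\rangle)\kappa(\langle X_i,X_l\rangle)]$ gives zero, because $\int\kappa(\langle x,y\rangle)\,d\mu_d(y)=0$ for every $x$. Hence signed cherries have mean exactly zero, and $\sum_i(\sum_jZ_{ij})^2$ has mean $n(n-1)$ under both models. Conditionally on $X_i$, the row sum $\sum_j\kappa(\langle X_i,X_j\rangle)$ has variance exactly $(n-1)\tr K^2$. So there is no excess of order $n^3\tr K^2/d$ and no variance inflation.

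A cleverer bounded-degree statistic cannot rescue this. Proposition~\ref{prop:fixed-degree} shows that once $n^3(\tr K^3)^2+n^4(\tr K^4)^2\to0$, every fixed-degree adjacency polynomial has vanishing standardized mean gap. Yet for the pure quartic kernel with $d^5\ll n\ll d^{16/3}$, the ratio $n\tr K^2/d$ diverges. So no mean-plus-Chebyshev test built from a fixed-degree polynomial reaches the third scale, including any adjacency polynomial meant to mimic the unobserved Gram matrix.

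The missing idea is a non-polynomial scan over latent configurations (Lemmas~\ref{lem:partition} and~\ref{lem:scan}). At fixed degree, $\|\kappa(\langle x,\cdot\rangle)-\kappa(\langle z,\cdot\rangle)\|_{L^2}^2\le C_L\tr K^2\,\|x-z\|^2$. An $\varepsilon$-net partition into $J\le e^{c_Ld}$ cells therefore gives a block kernel retaining most of $\|\kappa\|_{L^2}^2$. One then maximizes a quadratic likelihood-type score over all $J^n$ assignments of vertices to cells. The union bound under the null costs $n\log J\lesssim nd$, and the true assignment scores $\gtrsim n^2\tr K^2$. Balancing the two gives exactly $n\asymp d/\tr K^2$.

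\emph{Impossibility.} Your plan names the right tools but places the difficulty in the wrong spot and omits the mechanism that closes the argument. For graphs with a bounded number of edges, moments and cumulants are already controlled by $n^3(\tr K^3)^2$ and $n^4(\tr K^4)^2$ alone. One suppresses degree-two paths into operator convolutions and uses $(\tr K^2)^3\lesssim\tr K^4$ together with $v_0\le 2B/3$ for the core (Lemma~\ref{lem:core-sum}). The hypothesis $n\tr K^2/d\to0$ plays no role there.

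That hypothesis is needed precisely for contributions whose order grows with $n$. So there is no naive truncation of graphs with many edges: that is where the geometric information lives. The paper needs an expansion to order $R\asymp\log n$ with an explicit remainder, with three cost bounds:
\begin{itemize}
\item Attaching a new vertex costs a factor $C\sqrt{n\tr K^2/d}$ \emph{uniformly in the order}. The paper gets this from the spectral gap $d-1$ adding up over the conditionally centered leaves, which cancels the number of parent labels.
\item An internal edge costs $O(j(\tr K^2)^{1/(2L)})$ (Lemma~\ref{lem:operators}).
\item Bounded cycle rank is summed over arbitrarily long paths.
\end{itemize}

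Your step 2 also does not say how a cumulant expansion of $\log$ of the Bernoulli likelihood ratio becomes a relative-entropy bound with a controlled remainder. The paper sidesteps the Bernoulli likelihood entirely. It adds Gaussian noise and writes the posterior-mean discrepancy as a finite Hermite expansion, via iterated spherical and Gaussian integration by parts. It then integrates the entropy derivative along the noise variance (Lemma~\ref{lem:entropy}). Finally it returns to $G(n,p)$ through an edgewise sign channel costing $O((n^4(\tr K^4)^2)^{1/4})$. Without an order-uniform new-vertex bound and a remainder-controlled comparison of this kind, step 3 of your plan does not bound the total variation of the full graph.
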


The three quantities in \eqref{eq:sample-scale} are, respectively, the triangle, four-cycle, and geometric scales.
The kernel enters the formula through three operator traces. Given the nonzero eigenvalues $\lambda_1,\ldots,\lambda_r$ of $K$, counted with multiplicity,
one simply substitutes $\tr K^j=\sum_{i=1}^r\lambda_i^j$.

The scale $n_*$ may itself depend on $n$.
The theorem describes the two regimes $n\ll n_*$ and $n\gg n_*$,
where $a\ll b$ means $a/b\to0$ and $a\gg b$ means $b/a\to0$.
It does not assign a universal critical constant to the regime $n\asymp n_*$.
Indeed, Appendix~\ref{app:critical} gives examples with $n/n_*$ tending to a finite positive constant for which strong detection is still possible.

\subsection{What the conjecture misses: cancellation and positive-spectrum counterexamples}
Neither of the last two terms in Theorem~\ref{thm:main} can be omitted for general kernel sequences.
The first family of counterexamples combines the first and second spherical harmonic levels: their cubic traces cancel exactly,
while a signal detectable by four-cycles remains.
The second family requires no spectral cancellation: all nonzero eigenvalues have the same sign and both triangles and four-cycles have insufficient signal,
yet the global geometry is already detectable.

Let $P_{4,d}$ be the degree-four zonal spherical harmonic polynomial normalized by $P_{4,d}(1)=1$.
The next corollary gives the second family and its detection scale in both separated regimes.

\begin{corollary}\label{cor:quartic}
Fix $b\in(0,1)$ and let $W_d=(1+bP_{4,d})/2$, with $d\to\infty$.
All nonzero eigenvalues of its standardized integral operator are positive, and
\begin{equation}\label{eq:quartic-threshold}
 \TV\bigl(P_{n,d,W_d},G(n,1/2)\bigr)\longrightarrow
 \begin{cases}
 0,& n\ll d^5,\\[2pt]
 1,& n\gg d^5.
 \end{cases}
\end{equation}
In particular, throughout $d^5\ll n\ll d^{16/3}$,
\begin{equation}\label{eq:intro-gap}
 n^3(\tr K^3)^2+n^4(\tr K^4)^2\longrightarrow0,
 \qquad \TV\bigl(P_{n,d,W_d},G(n,1/2)\bigr)\longrightarrow1.
\end{equation}
\end{corollary}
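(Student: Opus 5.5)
Corollary~\ref{cor:quartic} will be deduced from Theorem~\ref{thm:main} by computing the three traces for this kernel. With $W_d=(1+bP_{4,d})/2$ we have $p=\tfrac12+\tfrac b2\,\mathbb E P_{4,d}(\langle X_1,X_2\rangle)=\tfrac12$, because $P_{4,d}$ is orthogonal to constants under the induced measure on $[-1,1]$. Hence $p(1-p)=1/4$ and $\kappa=bP_{4,d}$. Since $|P_{4,d}|\le1$ on $[-1,1]$ and $b<1$, we get $0\le W_d\le1$. Also $\deg W_d=4$, so the hypotheses of Theorem~\ref{thm:main} hold with $L=4$ and $p_0=1/2$.

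By the Funk--Hecke formula, the operator $f\mapsto\int P_{4,d}(\langle x,y\rangle)f(y)\,d\mu_d(y)$ acts on the space $\mathcal H_4$ of degree-four spherical harmonics as multiplication by $1/N_{4,d}$, where $N_{4,d}=\dim\mathcal H_4$, and it annihilates the other harmonic spaces. This uses the addition theorem $\sum_j Y_j(x)Y_j(y)=N_{4,d}P_{4,d}(\langle x,y\rangle)$. So $K$ has the single nonzero eigenvalue $\lambda=b/N_{4,d}>0$ with multiplicity $N_{4,d}$, which proves the positivity claim.

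It follows that $\tr K^j=b^jN_{4,d}^{1-j}$. Since $N_{4,d}=\binom{d+3}{4}-\binom{d+1}{2}\asymp d^4$, we obtain
\[
\tr K^2\asymp d^{-4},\qquad \tr K^3\asymp d^{-8},\qquad \tr K^4\asymp d^{-12}.
\]
The three scales in \eqref{eq:sample-scale} are therefore
\[
|\tr K^3|^{-2/3}\asymp d^{16/3},\qquad (\tr K^4)^{-1/2}\asymp d^{6},\qquad d/\tr K^2\asymp d^5,
\]
so $n_*\asymp d^5$, with constants depending only on $b$. Theorem~\ref{thm:main} then gives \eqref{eq:quartic-threshold} directly. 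In the statement $n\to\infty$ and $d=d_n$, so the condition $n\ll d^5$ translates to $n/n_*\to0$, and likewise for $n\gg d^5$.

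For \eqref{eq:intro-gap}, take $d^5\ll n\ll d^{16/3}$. Then
\[
n^3(\tr K^3)^2\asymp n^3d^{-16}=(n/d^{16/3})^3\to0,
\]
and
\[
n^4(\tr K^4)^2\asymp n^4d^{-24}=(n/d^6)^4\to0,
\]
while $n\gg d^5$ gives total variation tending to one by the previous step.

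The only step needing care is the Funk--Hecke normalization, namely that the eigenvalue is exactly $1/N_{4,d}$. This follows from the reproducing-kernel identity for $\mathcal H_4$ together with $P_{4,d}(1)=1$. The rest is direct substitution into Theorem~\ref{thm:main}.
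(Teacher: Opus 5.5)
Your proposal is correct and follows the paper's own argument: use the addition formula to see that $K$ has the single eigenvalue $b/M_{4,d}>0$ with multiplicity $M_{4,d}$, read off $\tr K^j=b^jM_{4,d}^{1-j}$ and the three scales $d^{16/3}$, $d^6$, $d^5$, and then invoke Theorem~\ref{thm:main} with $n_*\asymp d^5$. Your explicit checks that $p=1/2$, $\kappa=bP_{4,d}$, and $0\le W_d\le 1$, so that the hypotheses hold with $L=4$ and $p_0=1/2$, are details the paper leaves implicit, and they are right.
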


The failure of the cubic-trace criterion is therefore not solely due to cancellation between positive and negative eigenvalues.
Nor can this example be detected by a polynomial mean test of any higher but fixed degree:
for every fixed $D$, the orthogonal projection of its likelihood ratio onto polynomials of degree at most $D$ converges to the constant one.
The precise statement and proof are given in Appendix~\ref{app:projection}.
This is a strict separation between detection from the full graph and fixed-degree mean tests, not a general computational complexity lower bound.

\subsection{From triangle thresholds to spectral questions for general kernels}
Geometric detection is first a question about a full distribution: the edge marginals already agree with those of the independent-edge model,
so which dependencies reveal the latent space?
For dense spherical graphs with a threshold connection, Bubeck, Ding, Eldan, and R\'acz \cite[Theorems 1(a), 1(c), and 2]{BDER}
established detection by signed triangles and a matching comparison of full distributions at the scale $d\asymp n^3$.
The role of triangles in this model therefore goes beyond providing a convenient test:
their detection scale matches an impossibility bound that applies to every test.

Soft connection models raise a further question: if each edge is made less sensitive to the inner product,
does this optimality of triangles persist?
Liu and R\'acz \cite{LRlatent} studied Gaussian latent points and monotone inner-product connection functions,
obtaining upper and lower bounds for soft-connection detection and using signed triangles for detection.
In the spherical model, Mao, Wu, and Xu \cite[Theorem 1]{MWX} proved that
a fixed smooth connection function bounded away from zero and one, with $W'(0)\ne0$, has critical dimension $n^{3/4}$,
again attained by signed triangles.
Although threshold connections and nondegenerate smooth connections have different dimension scales,
both agree with the order of $n^3(\tr K^3)^2$.
The cubic trace is thus a natural candidate for a unified description, incorporating the effect of the connection function into one spectral quantity.
The Mao--Wu--Xu conjecture extends this agreement in established cases to general kernels.

The difficulty is that the cubic trace records only a signed spectral sum, rather than all the information across harmonic levels.
For dimension-dependent kernels, the first harmonic level that previously dominated the triangle signal may weaken or disappear,
and cubic contributions from different levels may cancel exactly.
Such sequences lie outside the nondegenerate setting of the fixed smooth-kernel theorem with $W'(0)\ne0$.
Our question concerns the detection boundary in these degenerate cases, rather than the validity of that theorem.

Earlier work already exhibits concrete limitations of triangles.
Bangachev and Bresler \cite{BBcube} proved that four-cycles outperform triangles in a range of parameters for the $L_\infty$ torus model.
In their work on random algebraic graphs, they introduced Fourier-paired connections \cite{BBalgebra},
whose positive and negative Fourier coefficients occur in pairs, making every odd spectral sum vanish.
The same work also gives examples separating statistical detection from low-degree methods.
Thus spectral cancellation as a mechanism for the failure of triangles has precedents.
The further questions here are how fixed low-degree polynomials realize this phenomenon in the spherical inner-product model,
and whether cycle statistics can miss detectable geometric information even without spectral cancellation.

Another directly related work with the same spherical latent space is
the study of Fourier coefficients of high-dimensional geometric graphs by Bangachev and Bresler \cite{BBfourier}.
They analyze low-degree detection for spherical threshold graphs and their Gaussian analogues by estimating signed subgraph moments.
The distinction here concerns both the connection kernel and the desired conclusion: we consider fixed-degree polynomial kernels whose coefficients may vary with the dimension,
and seek impossibility conditions for the full observation distribution.
Fourier estimates for threshold graphs do not directly supply the uniform comparison required for this class of kernels.

We answer these two questions within a single spherical model.
The quadratic cancellation family shows that the cubic trace is insufficient. The pure degree-four positive-spectrum family further shows that, even without cancellation,
the cubic and fourth traces together may miss global geometry.
Beyond these counterexamples, Theorem~\ref{thm:main} proves the converse conclusion for full distributions:
for every dense polynomial sequence of fixed degree, all graph tests fail when all three signals are below their detection levels.
This is the distinction between a uniform impossibility proof and the construction of a single successful statistic.
In the standard sense of low-degree likelihood ratio projections
\cite{KWB}, Appendix~\ref{app:projection} shows that no fixed-degree polynomial mean test recovers the information missed in the positive-spectrum quartic example.

Varying the edge density is a separate issue.
Liu, Mohanty, Schramm, and Yang \cite{LMSY} proved that,
for fixed $\alpha\ge1$ and $p=\alpha/n$, a polylogarithmic dimension already suffices for spherical threshold graphs to be undetectable.
Du, Mao, Sun, Wu, and Xu \cite[Theorem 1]{DMSWX} further proved that, for fixed $\varepsilon>0$,
if $\varepsilon/n\le p\le1-\varepsilon$, $d\ge(1+\varepsilon)n$, and
$d\gg\{nH(p)\}^3$, then the relative entropy of the threshold graph with respect to the independent-edge graph tends to zero,
where $H(p)=-p\log p-(1-p)\log(1-p)$ is binary entropy.
These results show that sparsity can itself change the required geometric comparison; one cannot simply let $p\to0$ in a dense result.
We keep a positive lower bound on the mean edge density, separating spectral degeneracy from sparsity.
The sparse quartic problem in Section~\ref{sec:conclusion} provides a concrete starting point for bringing these two issues together.

\subsection{Proof strategy}
The three detection scales are attained by cycle statistics and a finite partition of the sphere.
The main difficulty lies in the converse: even if every fixed small graph has weak signal,
one must rule out the accumulation of many weak dependencies into detectable information.
We distinguish two ways of adding edges in the expansion of the joint distribution.
Attaching a new vertex enlarges the set of latent positions, at a cost controlled by the spherical spectral gap;
adding an edge between existing vertices increases the cycle rank, at a cost controlled by the squared $L^2$ norm of the kernel.
For graphs of bounded cycle rank, integrating long paths leaves a core of bounded size.
This allows the expansion order to grow with $n$, and ultimately controls the full graph distribution through relative entropy.

Section~\ref{sec:prelim} gives the spherical spectral formulas and uniform moment estimates used in the calculations.
Section~\ref{sec:structure} then constructs examples dominated respectively by triangles, four-cycles, and global geometry,
and proves Corollary~\ref{cor:quartic} and the perturbation boundary for the quadratic cancellation family.
The detection conclusions for these examples follow from the main theorem and are not used in its proof.

The proof of the main theorem has two directions.
Section~\ref{sec:detection} constructs three tests, establishing strong detection when $n\gg n_*$.
Section~\ref{sec:nondetection} proves that the full graph distribution approaches the independent-edge model when $n\ll n_*$,
using, in order, a graph expansion of cumulants, higher-order integration by parts, and relative entropy comparison.
The addition of new vertices is controlled by the spherical spectral gap, and long cycles by operator traces;
together these estimates make the remainder vanish as the expansion order grows.
Section~\ref{sec:conclusion} formulates concrete questions about critical constants, growing-degree tests, and sparse quartic models.
The appendices supply the proof for fixed-degree likelihood ratio projections and the strong-detection phenomenon at a finite geometric scale.

\section{Spectral decomposition and uniform estimates for spherical kernels}\label{sec:prelim}
A fixed polynomial degree has two consequences: higher moments of the kernel are uniformly controlled by its second moment,
and the fourth trace of its integral operator controls the cube of its second trace.
The first handles edges sharing vertices; the second makes the remainder of the higher-order expansion vanish.
All spherical integrals use uniform measure of total mass one, so spectral multiplicities cannot be absorbed into the normalization of integration.

\subsection{Spherical harmonic decomposition}
Let $\mu_d$ be the uniform probability measure on $S^{d-1}$.
Write $\mathcal H_{\ell,d}$ for the restrictions to the sphere of homogeneous harmonic polynomials of degree $\ell$ on $\mathbb R^d$,
and let $M_{\ell,d}$ be its dimension. For $d\ge3$,
\[
 M_{\ell,d}=\binom{d+\ell-1}{\ell}-\binom{d+\ell-3}{\ell-2};
\]
a binomial coefficient with negative lower index is zero. For $d=2$, $M_{0,2}=1$ and
$M_{\ell,2}=2$ for $\ell\ge1$. These spaces are mutually orthogonal,
and every polynomial on the sphere has a unique decomposition into elements of finitely many such spaces.

Choose a real orthonormal basis of $\mathcal H_{\ell,d}$,
$Y_{\ell,1},\ldots,Y_{\ell,M_{\ell,d}}$.
The addition formula defines the normalized polynomial $P_{\ell,d}$:
\begin{equation}\label{eq:addition}
 \sum_{r=1}^{M_{\ell,d}}Y_{\ell,r}(x)Y_{\ell,r}(y)
   =M_{\ell,d}P_{\ell,d}(\langle x,y\rangle),\qquad P_{\ell,d}(1)=1.
\end{equation}
The sum on the left is invariant under simultaneous rotations of $x,y$, and hence depends only on their inner product.
Setting $x=y$ and integrating gives the constant $M_{\ell,d}$ on the right.
Cauchy--Schwarz immediately yields $|P_{\ell,d}(t)|\le1$.
For these standard formulas and the spectral decomposition of the spherical Laplacian used below, see
\cite[Sections 1.2 and 1.4]{DaiXu}; our normalization uses probability measure.

Hereafter let $k$ be a real polynomial of degree at most a fixed integer $L\ge1$,
satisfying
\[
 \int k(\langle x,y\rangle)\,d\mu_d(y)=0,\qquad
 \|k\|_\infty\le b,
\]
where $b$ is fixed. The integral above is independent of $x$.
Write
\[
 k(t)=\sum_{\ell=1}^{L}a_\ell P_{\ell,d}(t),\qquad
 (Tf)(x)=\int k(\langle x,y\rangle)f(y)\,d\mu_d(y).
\]
By \eqref{eq:addition}, $T$ has eigenvalue
$a_\ell/M_{\ell,d}$ on $\mathcal H_{\ell,d}$ and is zero on the remaining spaces. To simplify subsequent calculations, write
\begin{equation}\label{eq:hdt}
 h=\tr T^2=\sum_{\ell=1}^L\frac{a_\ell^2}{M_{\ell,d}},\quad
 \delta=\tr T^3=\sum_{\ell=1}^L\frac{a_\ell^3}{M_{\ell,d}^2},\quad
 \tau=\tr T^4=\sum_{\ell=1}^L\frac{a_\ell^4}{M_{\ell,d}^3}.
\end{equation}
These letters are abbreviations for this section and the proofs; the main theorem is stated directly in terms of operator traces.
If $h=0$, then $k=0$ and the graph distribution is the independent-edge model.
We exclude this case whenever dividing by $h$ or $\tau$ below.

\subsection{Moment comparison at fixed degree}
The following estimate allows both the coefficients and the dimension to vary. It also applies to the integral kernel of $T^r$,
since taking operator powers introduces no new spherical harmonic degrees.

\begin{lemma}\label{lem:moments}
For each fixed positive integer $q$, there is a constant $C_{L,q}$ depending only on $L,q$
such that every real polynomial $p$ of degree at most $L$ satisfies
\[
 \mathbb E|p(\langle X,Y\rangle)|^{2q}
 \le C_{L,q}\bigl(\mathbb E p(\langle X,Y\rangle)^2\bigr)^q,
\]
where $X,Y$ are independent with law $\mu_d$, uniformly for all $d\ge2$.
Moreover, for the bounded centered kernel $k$ above,
\begin{equation}\label{eq:h-cube}
 h^3\le C_{L,b}\tau.
\end{equation}
\end{lemma}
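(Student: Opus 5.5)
The plan is to prove the moment inequality by hypercontractivity on the sphere, and then to derive \eqref{eq:h-cube} from a uniform bound on the harmonic coefficients $a_\ell$. The constant must not depend on $d$. This rules out arguing directly with the density $\propto(1-t^2)^{(d-3)/2}$ of $\langle X,Y\rangle$.

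\emph{Moment bound.} Condition on $Y$ and set $f(x)=p(\langle x,Y\rangle)$. Because $\deg p\le L$, $f$ lies in $\bigoplus_{\ell\le L}\mathcal H_{\ell,d}$, and by rotation invariance the law of $f(X)$ does not depend on $Y$. It therefore suffices to show
\[
\|f\|_{L^{2q}(\mu_d)}\le C'_{L,q}\|f\|_{L^2(\mu_d)}\qquad\text{for all } f\in\textstyle\bigoplus_{\ell\le L}\mathcal H_{\ell,d}.
\]
Let $P_s=e^{s\Delta}$ be the heat semigroup, which multiplies $\mathcal H_{\ell,d}$ by $e^{-s\ell(\ell+d-2)}$. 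I will use the known spherical log-Sobolev inequality with constant $\rho_d= d-1$ (Mueller--Weissler for $d\ge3$, Weissler on the circle for $d=2$). Through Gross's theorem it gives $\|P_sg\|_{2q}\le\|g\|_2$ whenever $e^{2(d-1)s}\ge 2q-1$. Take $s=\log(2q-1)/(2(d-1))$ and $g=\sum_{\ell\le L}e^{s\ell(\ell+d-2)}f_\ell$, so that $f=P_sg$. Then
\[
\|f\|_{2q}\le\|g\|_2\le e^{sL(L+d-2)}\|f\|_2=(2q-1)^{L(L+d-2)/(2(d-1))}\|f\|_2 .
\]
The exponent is bounded by $L(L+1)/2$ uniformly in $d\ge2$. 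Raising to the power $2q$ and taking expectation over $Y$ gives the first claim.

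\emph{Bound on $h^3$.} First I show that $|a_\ell|\le C_{L,b}$ for every $\ell$, uniformly in $d$. Write $\lambda=(d-2)/2$. For $d\ge3$, $P_{\ell,d}=C^\lambda_\ell/C^\lambda_\ell(1)$, and its leading coefficient is
\[
c_{\ell,d}=\frac{2^\ell(\lambda)_\ell}{(2\lambda)_\ell}=\prod_{j=0}^{\ell-1}\frac{2\lambda+2j}{2\lambda+j}\ge1 .
\]
For $d=2$, $P_{\ell,2}$ is a Chebyshev polynomial with leading coefficient $2^{\ell-1}\ge1$. Since $\|k\|_{\infty}\le b$ on $[-1,1]$, the monomial coefficients of $k$ are at most $C_Lb$ in absolute value (Markov/Chebyshev coefficient bounds on the finite-dimensional space of polynomials of degree at most $L$).

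Now run a downward induction on $\ell$. The coefficient $a_L$ is the leading coefficient of $k$ divided by $c_{L,d}\ge1$, so $|a_L|\le C_Lb$. Because $|P_{L,d}|\le1$, the polynomial $k-a_LP_{L,d}$ has sup norm at most $b+C_Lb$ and degree at most $L-1$. Repeating the step $L$ times gives $|a_\ell|\le C_{L,b}$ for all $\ell$.

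Then, by H\"older over the $L$ levels,
\[
h^3=\Bigl(\sum_{\ell\le L}\frac{a_\ell^2}{M_{\ell,d}}\Bigr)^3\le L^2\sum_{\ell\le L}\frac{a_\ell^6}{M_{\ell,d}^3}\le L^2C_{L,b}^2\sum_{\ell\le L}\frac{a_\ell^4}{M_{\ell,d}^3}=L^2C_{L,b}^2\,\tau .
\]

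\emph{Main obstacle.} The delicate step is uniformity in $d$, which enters in two places. The first is the hypercontractive constant: the log-Sobolev constant grows like $d$, and this growth exactly compensates the eigenvalue growth $\ell(\ell+d-2)$. The cases $d=2,3$ must be checked against the cited log-Sobolev constants. The second is the lower bound $c_{\ell,d}\ge1$ on the leading coefficients, which prevents the triangular inversion from amplifying the coefficients. A naive Cauchy--Schwarz bound using only $\|k\|_2$ gives $|a_\ell|\lesssim\sqrt{M_{\ell,d}}$, which is far too weak. A fallback for the first part would be the Gaussian limit of $\sqrt d\,\langle X,Y\rangle$ combined with a compactness argument in $d$, but that route makes uniformity harder to control.
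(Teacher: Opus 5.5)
Your proof is correct, but both halves take a different route from the paper.

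\emph{Moment inequality.} The paper sets $Z=\sqrt d\,\langle X,Y\rangle$ and uses only the explicit even moments of a spherical coordinate. The Hankel matrix $(\mathbb E Z^{i+j})_{i,j\le L}$ converges to the positive definite Gaussian one and is positive definite for each fixed $d$. A compactness step (large $d$ via the limit, finitely many small $d$ separately) then gives $\mathbb E p^2\ge c_L\sum_j c_j^2$, and the uniformly bounded moments of $Z$ give $\mathbb E|p|^{2q}\le C_{L,q}(\sum_j c_j^2)^q$.

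Your hypercontractive argument instead relies on the sharp spherical log-Sobolev inequalities of Mueller--Weissler and Weissler. In return it gives an explicit constant: your exponent $L(L+d-2)/(2(d-1))$ is at most $L^2/2$, so $C_{L,q}=(2q-1)^{qL^2}$ works. It also applies to all of $\bigoplus_{\ell\le L}\mathcal H_{\ell,d}$, not only zonal functions. The Gaussian-limit-plus-compactness fallback you set aside is exactly the paper's proof, and uniformity there costs only the one-line compactness step.

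\emph{The bound on $h^3$.} For \eqref{eq:h-cube}, the paper obtains $\sum_\ell a_\ell^2\le C_Lb^2$ from a uniform lower bound on the smallest eigenvalue of the Gram matrix of $P_{1,d},\dots,P_{L,d}$ in Lebesgue $L^2([-1,1])$. This again uses $P_{\ell,d}\to t^\ell$ and compactness. Your downward triangular inversion, using leading coefficients at least one and $|P_{\ell,d}|\le1$, gives $|a_\ell|\le C_Lb$ directly and explicitly, with no limiting argument. The closing H\"older step $h^3\le L^2\sum_\ell a_\ell^6/M_{\ell,d}^3\le C\tau$ is identical to the paper's.
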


\begin{proof}
Let $Z=\sqrt d\,\langle X,Y\rangle$. Rotational invariance and the even-moment formula for a spherical coordinate give
\[
 \mathbb E Z^{2r}=(2r-1)!!\prod_{j=0}^{r-1}\frac{d}{d+2j},
 \qquad \mathbb E Z^{2r+1}=0.
\]
The even-moment formula also follows from the independence of the direction and length of a standard Gaussian vector:
divide the $2r$th moment of its first coordinate by the $2r$th moment of its length.
Thus moments of each fixed order are uniformly bounded and converge to the corresponding standard Gaussian moments.

Write $p(Z/\sqrt d)=\sum_{j=0}^L c_jZ^j$.
The matrix $(\mathbb E Z^{i+j})_{0\le i,j\le L}$ converges to the positive definite Gaussian moment matrix.
For each fixed $d\ge2$, the law of $Z$ has a positive density on an interval, so this matrix is also positive definite.
First considering all sufficiently large $d$ and then taking a minimum over the finitely many remaining dimensions gives
\[
 \mathbb E p(Z/\sqrt d)^2\ge c_L\sum_{j=0}^L c_j^2.
\]
On the other hand, the inequality for a finite sum and the uniform moment bounds give
\[
 \mathbb E\left|\sum_{j=0}^L c_jZ^j\right|^{2q}
 \le C_{L,q}\left(\sum_{j=0}^Lc_j^2\right)^q.
\]
Combining the two inequalities proves the first assertion.

To prove \eqref{eq:h-cube}, we first bound the spherical harmonic coefficients.
Starting from $P_{0,d}=1$ and $P_{1,d}=t$, the recurrence
\[
 (\ell+d-2)P_{\ell+1,d}
  =(2\ell+d-2)tP_{\ell,d}-\ell P_{\ell-1,d},
 \qquad \ell\ge1,
\]
shows that, for fixed $\ell$, the monomial coefficients of $P_{\ell,d}$ converge to those of $t^\ell$.
Consequently, the Gram matrix of $P_{1,d},\ldots,P_{L,d}$ in the Lebesgue space
$L^2([-1,1])$ has smallest eigenvalue bounded uniformly away from zero:
the limiting matrix is positive definite, and the finitely many remaining matrices are positive definite because the polynomial degrees are distinct.
The bound $\|k\|_\infty\le b$ therefore yields $\sum_\ell a_\ell^2\le C_Lb^2$.
Set $h_\ell=a_\ell^2/M_{\ell,d}$ and omit terms with $a_\ell=0$. Then
\[
 \tau=\sum_\ell\frac{h_\ell^3}{a_\ell^2}
 \ge c_{L,b}\sum_\ell h_\ell^3
 \ge \frac{c_{L,b}}{L^2}\left(\sum_\ell h_\ell\right)^3.
\]
This proves the claim.
\end{proof}

Let $\|T\|_{\mathrm{op}}$ denote the operator norm on $L^2(\mu_d)$.
Elementary inequalities for finite spectral sums give
\begin{equation}\label{eq:spectral-basic}
 \delta^2\le h\tau,\qquad
 \tr T^6\le\tau^{3/2},\qquad
 \|T\|_{\mathrm{op}}^2\le\sqrt\tau.
\end{equation}
For a graph with $n$ vertices, set
\begin{equation}\label{eq:signals}
 A_3=n^3\delta^2,\qquad A_4=n^4\tau^2,\qquad G=\frac{nh}{d},
 \qquad \rho=n\|T\|_{\mathrm{op}}^2.
\end{equation}
In particular, $\rho\le A_4^{1/4}$, and when $A_4\le1$,
$h\le C_{L,b}n^{-2/3}$.
These quantities will be used to estimate cycle statistics, global geometry, and long paths.

\section{When the three scales cannot replace one another}\label{sec:structure}
The three-scale theorem gives more than a list of sufficient conditions.
At low degrees, some scales can indeed be controlled by the others,
but cubic kernels already exhibit an isolated geometric signal; at degree four this is possible even without spectral cancellation.
We then return to the quadratic cancellation family and determine both separated regimes under perturbation.

\subsection{Restrictions imposed by degree and spectral signs}
Spectral rank explains the special role of low-degree kernels.
Let $r$ be the number of nonzero eigenvalues of $T$, counted with multiplicity, and set
$m_3=\sum_i|\lambda_i|^3$. H\"older's inequality and finite-dimensional norm comparison yield
\begin{equation}\label{eq:rank-bounds}
 h^2\le r\tau,\qquad h^3\le r m_3^2,\qquad \tau^3\le m_3^4.
\end{equation}
For example, the second inequality follows from
$\sum_i|\lambda_i|^2\le r^{1/3}(\sum_i|\lambda_i|^3)^{2/3}$; 
the third follows because the $\ell^4$ norm is at most the $\ell^3$ norm.
Since $r\le\sum_{\ell=1}^LM_{\ell,d}\le C_Ld^L$, these inequalities imply the following.

\begin{proposition}\label{prop:absorption}
If $L\le2$, then $G\le C_LA_4^{1/4}$.
If all nonzero eigenvalues have the same sign and $L\le3$, then
$G\le C_LA_3^{1/3}$, $A_4\le A_3^{4/3}$.
\end{proposition}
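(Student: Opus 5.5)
The plan is to substitute the rank bound $r\le C_Ld^L$ into the inequalities \eqref{eq:rank-bounds} and express everything through the signals $A_3=n^3\delta^2$, $A_4=n^4\tau^2$, $G=nh/d$ of \eqref{eq:signals}. The argument is pure bookkeeping of exponents, so the main task is to check that the powers of $n$ and $d$ cancel exactly.

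For $L\le2$, the first inequality $h^2\le r\tau$ with $r\le C_Ld^2$ gives $h\le C_L^{1/2}d\,\tau^{1/2}$. Hence
\[
 G=\frac{nh}{d}\le C_L^{1/2}\,n\tau^{1/2}=C_L^{1/2}A_4^{1/4},
\]
since $A_4^{1/4}=n\tau^{1/2}$. This proves the first claim. (For $d=2$ the rank is at most $2L+\ldots$, still bounded by $C_Ld^L$.)

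Now assume the nonzero eigenvalues have a common sign and $L\le3$.
\begin{itemize}
\item Because of the common sign, $m_3=\sum_i|\lambda_i|^3=|\sum_i\lambda_i^3|=|\delta|$.
\item The second inequality in \eqref{eq:rank-bounds} with $r\le C_Ld^3$ gives $h^3\le C_Ld^3\delta^2$, so $h\le C_L^{1/3}d\,|\delta|^{2/3}$.
\item Therefore
\[
 G\le C_L^{1/3}n|\delta|^{2/3}=C_L^{1/3}(n^3\delta^2)^{1/3}=C_L^{1/3}A_3^{1/3}.
\]
\item The third inequality gives $\tau^3\le\delta^4$, i.e.\ $\tau^2\le|\delta|^{8/3}$.
\item Hence $A_4=n^4\tau^2\le n^4|\delta|^{8/3}=(n^3\delta^2)^{4/3}=A_3^{4/3}$, with constant $1$.
\end{itemize}

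I do not expect a genuine obstacle. The only step that needs care is the rank bound: $M_{\ell,d}\le\binom{d+\ell-1}{\ell}\le C_\ell d^\ell$, summed over $\ell\le L$, gives $r\le C_Ld^L$ uniformly for $d\ge2$. The H\"older steps themselves are the ones already indicated after \eqref{eq:rank-bounds}.
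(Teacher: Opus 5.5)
Your proposal is correct and follows the paper's proof essentially verbatim: substitute the rank bound $r\le C_Ld^L$ into the inequalities \eqref{eq:rank-bounds}, use $m_3=|\delta|$ when the nonzero eigenvalues share a sign, and note that $\tau^3\le\delta^4$ is equivalent to $A_4\le A_3^{4/3}$. Your exponent bookkeeping checks out. The rank bound also holds for $d=2$, since there $r\le 2L$.
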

\begin{proof}
In the first case,
$G=nh/d\le(\sqrt r/d)n\sqrt\tau\le C_LA_4^{1/4}$.
In the second case, $m_3=|\delta|$, so
$G\le(r^{1/3}/d)n|\delta|^{2/3}\le C_LA_3^{1/3}$.
Finally, $\tau^3\le\delta^4$ is equivalent to $A_4\le A_3^{4/3}$.
\end{proof}
Thus the separated detection boundary for quadratic kernels requires only the triangle and four-cycle scales.
For same-sign spectra of degree at most three, the cubic trace determines the zero and infinite regimes in Theorem~\ref{thm:main}.
The latter statement concerns the detection boundary; it does not assert that triangles are optimal along every detectable sequence.

\subsection{Three examples with different dominant scales}
Throughout this subsection take $b=1/4$ and connection probability $(1+k)/2$.
The standardized kernel is then $k$ itself.
Write $u\asymp v$ when $u/v$ lies between two positive constants independent of $n,d$.
In each of the following sequences, one signal tends to infinity and the other two tend to zero.

First, take $k=bP_{1,d}$. Since $M_{1,d}=d$,
$h=b^2/d$, $\delta=b^3/d^2$, $\tau=b^4/d^3$.
Let $d=q^5$ and $n=q^7$, with integers $q\to\infty$. Then
\[
 A_3\asymp q,\qquad A_4\asymp q^{-2},\qquad G\asymp q^{-3}.
\]
Only the triangle signal diverges.

Next, let $D=M_{2,d}=(d-1)(d+2)/2$ and take
\[
 k=aP_{1,d}-bP_{2,d},\qquad a=b(d/D)^{2/3}.
\]
For $d\ge3$, $a\le b$, hence $|k|\le1/2$.
The cubic traces of the two spectral levels cancel exactly, while
\[
 \delta=0,\qquad h\asymp d^{-2},\qquad \tau\asymp d^{-17/3}.
\]
With $d=q^{12}$ and $n=q^{35}$,
$A_3=0$, $A_4\asymp q^4$, $G\asymp q^{-1}$.
Thus exact cubic-trace cancellation in this quadratic family leaves four-cycles as the only diverging signal.

The third example moves the cancellation to the second and third harmonic levels.
Let $R=M_{2,d}$ and $S=M_{3,d}=d(d-1)(d+4)/6$, and take
\[
 k=aP_{2,d}-bP_{3,d},\qquad a=b(R/S)^{2/3}.
\]
Again $|k|\le1/2$ and $\delta=0$, while
$h\asymp d^{-3}$, $\tau\asymp d^{-26/3}$.
Setting $d=q^6$ and $n=q^{25}$ gives
\[
 A_3=0,\qquad A_4\asymp q^{-4},\qquad G\asymp q.
\]
The simultaneous vanishing of the triangle and four-cycle signals therefore does not imply that the full graph is undetectable.
The common geometric constraints on all vertices are decisive here.

\subsection{A quartic kernel without spectral cancellation}
A stronger phenomenon arises from a single harmonic level. In this subsection, $b\in(0,1)$ is any fixed constant.
For $d\ge3$, the normalized degree-four zonal polynomial is
\[
 P_{4,d}(t)=\frac{(d+2)(d+4)t^4-6(d+2)t^2+3}{(d-1)(d+1)},
 \qquad M=M_{4,d}=\frac{d(d-1)(d+1)(d+6)}{24}.
\]
Take $k=bP_{4,d}$. By the addition formula, all its nonzero eigenvalues equal the positive number $b/M$,
with multiplicity $M$, so
\begin{equation}\label{eq:quartic-traces}
 h=\frac{b^2}{M},\qquad \delta=\frac{b^3}{M^2},\qquad
 \tau=\frac{b^4}{M^3}.
\end{equation}
There is no sign cancellation in the cubic trace, but the three sample-size scales are
\[
 |\delta|^{-2/3}\asymp d^{16/3},\qquad
 \tau^{-1/2}\asymp d^6,\qquad d/h\asymp d^5.
\]
The geometric signal is therefore the first to appear.
Theorem~\ref{thm:main} gives impossibility when $n/d^5\to0$
and strong detection when $n/d^5\to\infty$, proving Corollary~\ref{cor:quartic}.
In particular, with $d=q^4$ and $n=q^{21}$,
\begin{equation}\label{eq:quartic-gap}
 A_3\asymp q^{-1},\qquad A_4\asymp q^{-12},\qquad G\asymp q.
\end{equation}
A spectrum of one sign is not enough for cycle signals to determine detection;
the dimension of the space supporting the spectrum and its spherical realization also matter.

\subsection{The perturbation boundary for quadratic cancellation}
The quadratic family also exhibits a continuous transition between triangles and four-cycles.
In this subsection fix $b=1/4$ again. With $D=(d-1)(d+2)/2$, let
\begin{equation}\label{eq:perturbed-kernel}
 k_{d,\varepsilon}=a(1+\varepsilon)P_{1,d}-bP_{2,d},\qquad
 a=b(d/D)^{2/3},\qquad |\varepsilon|\le\tfrac12.
\end{equation}
For $d\ge3$, $\|k_{d,\varepsilon}\|_\infty\le5/8$, so this always defines a valid dense graph model.

\begin{corollary}\label{cor:quadratic}
Let $d=d_n\ge3$, $|\varepsilon_n|\le1/2$, and set
\[
 d_*(n,\varepsilon_n)=
 \max\{n^{6/17},\ n^{3/8}|\varepsilon_n|^{1/4}\}.
\]
For the graph defined by \eqref{eq:perturbed-kernel},
its total variation distance from $G(n,1/2)$ tends to zero when $d/d_*\to\infty$,
and tends to one when $d/d_*\to0$.
\end{corollary}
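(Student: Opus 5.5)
The plan is to derive Corollary~\ref{cor:quadratic} directly from Theorem~\ref{thm:main}. Since $W=(1+k_{d,\varepsilon})/2$ has $p=1/2$ and degree $2$, and $\|k_{d,\varepsilon}\|_\infty\le5/8$ keeps $W$ in $[3/16,13/16]$, the theorem applies with $L=2$ and $p_0=1/2$. The standardized kernel is $k_{d,\varepsilon}$ itself. By Proposition~\ref{prop:absorption}, for $L\le2$ we have $G\le C A_4^{1/4}$. Hence the geometric scale never beats the four-cycle scale, and $n/n_*\to0$ (resp.\ $\infty$) is equivalent to $A_3+A_4\to0$ (resp.\ $\to\infty$). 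Because $n/n_*=\max\{A_3^{1/3},A_4^{1/4},G\}$, absorbing $G$ costs only a constant factor, which does not affect limits $0$ or $\infty$.

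Next I compute the traces. The eigenvalues are $a(1+\varepsilon)/d$ with multiplicity $d$ and $-b/D$ with multiplicity $D$. The choice $a=b(d/D)^{2/3}$ gives $a^3/d^2=b^3/D^2$. Therefore
\[
\delta=\frac{b^3}{D^2}\bigl((1+\varepsilon)^3-1\bigr),\qquad |\delta|\asymp |\varepsilon| d^{-4},
\]
uniformly for $|\varepsilon|\le1/2$, since $(1+\varepsilon)^3-1=\varepsilon(3+3\varepsilon+\varepsilon^2)$ and the last factor lies in $[3/2\cdot 1/2\ldots]$, bounded away from $0$. For the fourth trace, $\tau=a^4(1+\varepsilon)^4/d^3+b^4/D^3$. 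The first term is $\asymp d^{8/3-8/3-3}\cdot d^{-0}$; more carefully $a^4\asymp d^{-4/3}$, so the first term is $\asymp d^{-13/3}\cdot d^{-4/3}=d^{-17/3}$, while the second is $\asymp d^{-6}$. Hence $\tau\asymp d^{-17/3}$ uniformly in $\varepsilon$, using $(1+\varepsilon)^4\in[1/16,81/16]$.

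It follows that $A_4=n^4\tau^2\asymp n^4d^{-34/3}$, which equals $(n^{6/17}/d)^{34/3}$, and $A_3=n^3\delta^2\asymp n^3\varepsilon^2d^{-8}$, which equals $(n^{3/8}|\varepsilon|^{1/4}/d)^{8}$ (when $\varepsilon=0$, $A_3=0$). Thus
\[
A_3+A_4\asymp \bigl(n^{3/8}|\varepsilon|^{1/4}/d\bigr)^8+\bigl(n^{6/17}/d\bigr)^{34/3}.
\]
This quantity tends to $0$ exactly when both ratios tend to $0$, i.e.\ when $d/d_*\to\infty$. It tends to $\infty$ exactly when the maximum ratio tends to $\infty$, i.e.\ when $d/d_*\to0$. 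Applying Theorem~\ref{thm:main} then gives the corollary.

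The main point of care is the uniformity of the $\asymp$ constants in $\varepsilon$ and $d\ge3$. In particular, one must check that $|\delta|\asymp|\varepsilon|d^{-4}$ holds with constants independent of $\varepsilon$, including as $\varepsilon\to0$. This uses $D\asymp d^2$ for $d\ge3$ and the factorization of $(1+\varepsilon)^3-1$ above. Everything else is a direct substitution.
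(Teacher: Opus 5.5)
Your proposal is correct and follows the paper's proof essentially verbatim. The two-level trace formulas give $\delta^2\asymp\varepsilon^2d^{-8}$ and $\tau\asymp d^{-17/3}$ uniformly in $|\varepsilon|\le1/2$, Proposition~\ref{prop:absorption} absorbs $G$ into $A_4^{1/4}$, and writing $A_3,A_4$ as powers of $n^{3/8}|\varepsilon|^{1/4}/d$ and $n^{6/17}/d$ reduces everything to Theorem~\ref{thm:main}. The only blemish is an arithmetic slip in the fourth-trace line: you wrote $a^4\asymp d^{-4/3}$ where you meant $a^2\asymp d^{-4/3}$, so $a^4\asymp d^{-8/3}$; your conclusion $a^4(1+\varepsilon)^4/d^3\asymp d^{-17/3}$ is nevertheless right.
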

\begin{proof}
The exact formulas for the two spectral levels give
\[
 \delta=\frac{b^3}{D^2}\{(1+\varepsilon)^3-1\},\qquad
 h\asymp d^{-2},\qquad \tau\asymp d^{-17/3},
 \qquad \delta^2\asymp\varepsilon^2d^{-8}.
\]
The comparison constants are uniform for $|\varepsilon|\le1/2$; at $\varepsilon=0$, both sides of the last comparison are zero.
The last comparison follows from $(1+\varepsilon)^3-1=\varepsilon(3+3\varepsilon+\varepsilon^2)$,
whose second factor is bounded away from zero on this interval.
By Proposition~\ref{prop:absorption}, $G$ is controlled by the four-cycle scale.
Moreover, $A_3\asymp(n^{3/8}|\varepsilon|^{1/4}/d)^8$ and
$A_4\asymp(n^{6/17}/d)^{34/3}$.
The two signals tend to zero simultaneously, or their maximum tends to infinity, precisely under the two stated separation conditions.
The conclusion follows from Theorem~\ref{thm:main}.
\end{proof}
At exact cancellation, the boundary is $d\asymp n^{6/17}$;
the two terms have the same order when $|\varepsilon|\asymp n^{-3/34}$.
Thus cancellation affects detection beyond a single isolated parameter value:
it changes the dominant statistic throughout a neighborhood that shrinks with sample size.

\section{Detection by cycle statistics and geometric tests}\label{sec:detection}
Triangles and four-cycles use relations among only a few vertices; geometric detection compares the positions of all vertices at once.
We construct these three tests and show that their total error can tend to zero whenever one of the corresponding signals diverges.
To simplify the variance calculations, first consider edge density $1/2$ and conditional edge probability
$(1+k(\langle X_i,X_j\rangle))/2$, where $\|k\|_\infty\le b<1$.
Denote its law by $P_k$ and the comparison law by $Q=G(n,1/2)$.
General dense edge probabilities are treated in Section~\ref{sec:transfer}.

\subsection{Triangles and four-cycles}
We use the standard signed cycle statistics for spherical detection \cite{BDER,MWX}.
Let $Y_{ij}=2A_{ij}-1$. Under $Q$, these are independent symmetric signs;
conditional on the latent positions, their means are $k(\langle X_i,X_j\rangle)$.
For a simple undirected cycle $C$, considered without a starting point or orientation, write $Y_C=\prod_{e\in C}Y_e$.
Let $S_3,S_4$ be the sums of $Y_C$ over all triangles and four-cycles, respectively, and set
$N_3=\binom n3$, $N_4=3\binom n4$.
Integrating the vertices of a cycle successively gives
\[
 \mathbb E_{P_k}S_3=N_3\delta,\qquad
 \mathbb E_{P_k}S_4=N_4\tau.
\]
Products of signs over distinct edge sets are orthogonal under $Q$, so
$\mathbb E_QS_r=0$, $\Var_QS_r=N_r$ for $r=3,4$.

\begin{lemma}\label{lem:cycle-detection}
If $A_4\to\infty$, the four-cycle statistic gives strong detection.
If $A_3\to\infty$ and $A_4\le A_3$, the triangle statistic gives strong detection.
\end{lemma}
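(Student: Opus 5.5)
Both assertions will follow from Chebyshev's inequality once the variance of $S_r$ under $P_k$ is controlled, so the plan is a second-moment computation. Under $Q$ we have $\mathbb E_QS_r=0$ and $\Var_QS_r=N_r$. We reject when $S_r>\tfrac12N_r\cdot(\text{signal})$, that is, $\tfrac12N_3\delta$ for triangles (after replacing $S_3$ by $-S_3$ if $\delta<0$) and $\tfrac12N_4\tau$ for four-cycles.

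\textbf{Type I error.} Chebyshev gives a type I error of order $N_r/(N_r\,\text{signal})^2$. For four-cycles this is $\asymp 1/(n^4\tau^2)=1/A_4\to0$; for triangles it is $\asymp 1/A_3\to0$.

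\textbf{Type II error.} It suffices to show $\Var_{P_k}S_r=o\bigl((N_r\,\text{signal})^2\bigr)$. Expand
\[
\Var_{P_k}S_r=\sum_{C,C'}\bigl(\mathbb E Y_CY_{C'}-\mathbb E Y_C\,\mathbb E Y_{C'}\bigr).
\]
Pairs of vertex-disjoint cycles have independent latent positions, so they contribute zero. For a pair sharing $s\ge1$ vertices, condition on the latent points: shared edges give $Y_e^2=1$, and the remaining edges give a product of kernel values $k(\langle X_i,X_j\rangle)$ over the symmetric difference of the edge sets. The number of such pairs is $O(n^{2r-s})$. Each expectation is bounded by integrating along the graph $H=C\triangle C'$ (edges counted once). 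The key steps, in order, are:
\begin{enumerate}
\item Leaves and degree-two paths integrate out to kernels of $T^j$.
\item A cycle in the remaining graph gives a trace $\tr T^j$.
\item Otherwise one uses Hölder together with Lemma~\ref{lem:moments}, applied to $k$ and to the kernels of $T^j$, which have the same degree bound, to bound the expectation by products of $h$, $\tau^{1/2}$ and similar quantities.
\item Identical pairs ($s=r$ and $C=C'$) give the term $N_r$, which is $O(1/A_r)$ relative to the squared mean.
\end{enumerate}

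For four-cycles, the pairs with $s=1$ contribute roughly $n^7\tau^2$, which is negligible against the squared mean $n^8\tau^2$. For $s=2$ or $3$, the graph $H$ has at most six edges on at most six vertices, and the expectation is bounded by quantities such as $\tr T^6\le\tau^{3/2}$, $h\tau$, or $h^2$ (from \eqref{eq:spectral-basic}). These must be compared with $n^{s}\tau^2$. Using $h^3\le C\tau$ from \eqref{eq:h-cube}: for example, the term $n^{6}\tr T^6$ relative to $n^8\tau^2$ is at most $n^{-2}\tau^{-1/2}=A_4^{-1/2}\cdot n^{0}\cdot n^{-0}$, which tends to zero; likewise $n^{5}h\tau/(n^8\tau^2)=h/(n^3\tau)\lesssim \tau^{-2/3}n^{-3}\le A_4^{-1/2}\cdot n^{-1}\cdot\tau^{-1/6}$, and these reduce to negative powers of $A_4$ after substituting $\tau\ge A_4^{1/2}n^{-2}$ in the opposite direction. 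Since $n^{-2}\le\tau^{1/2}$ is not available, I would instead bound each term directly using $\tau\ge n^{-2}A_4^{1/2}$ and $h\le C\tau^{1/3}$.

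For triangles, the overlaps have $s=1$ (a bowtie, contributing $n^5\delta^2$, which is negligible against $n^6\delta^2$) or $s=2$ (sharing an edge). In the second case $H$ is a four-cycle, so the contribution is $n^4\tau$. This is where the hypothesis $A_4\le A_3$ enters:
\[
\frac{n^4\tau}{n^6\delta^2}=\frac{A_4^{1/2}}{A_3}\le A_3^{-1/2}\to0.
\]

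\textbf{Main obstacle.} The hardest part will be the careful enumeration of overlap patterns for pairs of four-cycles, checking that every pattern with $s=2,3$ is $o(n^8\tau^2)$ using only $A_4\to\infty$ and Lemma~\ref{lem:moments}. Shared edges cancel (since $Y_e^2=1$), so $H$ can be a six-cycle, a pair of paths, or a theta-like graph. I would handle these uniformly by integrating out degree-two vertices to reduce each to $\tr T^j$ or to a $\|k\|_\infty$-bounded product. Since $|k|\le b$, any expectation over a connected $H$ is at most the trace-type quantity obtained from a spanning cycle, bounded via \eqref{eq:spectral-basic}.
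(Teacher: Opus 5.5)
Your overall route is the paper's: Chebyshev under both laws after classifying pairs of cycles by their overlap. The triangle half is fine. The bowtie covariances are in fact exactly zero, and your displayed ratio should read $n^4\tau/(n^6\delta^2)=\sqrt{A_4}/(nA_3)$, which is only smaller.

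The gap is in the four-cycle variance, at exactly the step you defer. Your assertion that for overlaps sharing two or three vertices the graph $H=C\triangle C'$ has at most six edges is false. When two four-cycles share two vertices but no edge, nothing cancels, and $H$ is their full union: eight edges on six vertices. It is either four length-two paths joining the shared vertices (a $K_{2,4}$), or paths of lengths $1,3,2,2$ joining them. These are the dominant overlaps: there are $\asymp n^6$ such pairs, so each covariance must be $o(n^2\tau^2)$.

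Neither graph has a spanning cycle, so your uniform reduction does not apply. In any case, bounding off-cycle edges by $\|k\|_\infty$ puts absolute values inside the integral and loses the spectral identity that produces a trace. An $h^2$-type bound, which you list among the candidates, is also not enough here. For $k=bP_{1,d}$ and $n=d^{3/2}\omega$ with $\omega\to\infty$ slowly, $A_4\asymp\omega^4\to\infty$, while $h^2/(n^2\tau^2)\asymp d/\omega^2\to\infty$.

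The missing step is to condition on the two shared points and integrate out the internal vertices of each path. This turns the two product means into $u=\mathbb E\,q(X,Y)^4$ and $v=\mathbb E[k(\langle X,Y\rangle)r(X,Y)q(X,Y)^2]$, where $q,r$ are the kernels of $T^2,T^3$. The paper then applies H\"older and Lemma~\ref{lem:moments} to these fixed-degree kernels, obtaining $u\le C_L\tau^2$ and $|v|\le C_L\sqrt h\,\tau^{7/4}$.

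Even the crude bounds $|k|\le b$, $|q|\le b^2$, $|r|\le b^3$ suffice, since they give $|u|,|v|\le b^4\,\mathbb E q^2=b^4\tau$. Combine this with $\tr T^6\le\tau^{3/2}$, $\delta^2\le h\tau$, and the value $\tau$ for the remaining overlaps on five or four vertices. Then every nonzero covariance is $O(\tau)$, and the overlapping pairs contribute $O(n^6\tau/(n^8\tau^2))=O(A_4^{-1/2})$ relative to the squared mean. With this step inserted, your Chebyshev argument closes.
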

\begin{proof}
Two triangles with no common vertex are independent. If they share exactly one vertex, they are conditionally independent given that vertex,
and both conditional means equal the constant $\delta$, independent of its position; hence their covariance is zero.
If two distinct triangles share an edge, $Y_e^2=1$ reduces their product to a four-cycle.
There are six such pairs on each four-vertex set. Thus
\begin{equation}\label{eq:triangle-var}
 \Var_{P_k}S_3=N_3(1-\delta^2)
       +12\binom n4(\tau-\delta^2)
 \le C(n^3+n^4\tau).
\end{equation}
When $\delta\ne0$, reject on the event
$\operatorname{sgn}(\delta)S_3>N_3|\delta|/2$.
Applying Chebyshev's inequality under both laws bounds the total error by
\[
 C\left(\frac1{A_3}+\frac{\sqrt{A_4}}{nA_3}\right).
\]
This tends to zero under the triangle assumptions of the lemma.

The four-cycle variance has more overlap types. Let $q(x,y)$ and $r(x,y)$ be the integral kernels of $T^2$ and $T^3$, respectively, and set
\[
 s_6=\tr T^6,\qquad u=\mathbb E q(X,Y)^4,\qquad
 v=\mathbb E[k(\langle X,Y\rangle)r(X,Y)q(X,Y)^2].
\]
The table lists all overlap types of two distinct four-cycles that can have nonzero covariance.
Counts refer to unordered pairs on a fixed union of vertices; the mean is the expectation of the product of the two cycle statistics.
\begin{center}
\begin{tabular}{clcc}
\toprule
Union size&Overlap&Product mean&Count\\
\midrule
6&One shared edge&$s_6$&180\\
6&Shared vertices nonadjacent in both&$u$&45\\
6&Shared vertices adjacent in one only&$v$&180\\
5&One shared edge&$\delta^2$&60\\
5&Two shared edges&$\tau$&30\\
4&Two shared edges&$\tau$&3\\
\bottomrule
\end{tabular}
\end{center}
For six vertices, there are 15 choices of the two shared vertices and 3 ways to split the remaining four into two pairs.
On a fixed set of four vertices, there are two four-cycles in which the shared vertices are adjacent and one in which they are not.
The three counts are therefore $15\cdot3\cdot4$, $15\cdot3$, and $15\cdot3\cdot4$.
For five vertices, there are 10 choices of the shared triple; each four-cycle induces a two-edge path on this triple.
There are 3 ordered choices of identical paths and 6 of distinct paths, giving 30 and 60, respectively.
On four vertices there are three four-cycles, giving three unordered pairs.
With fewer than two common vertices, the covariance is zero as in the triangle case.

After the shared edges are canceled, the three six-vertex types become a six-cycle and four paths with common endpoints of lengths
$(2,2,2,2)$ and $(1,3,2,2)$, respectively.
The five-vertex type with one shared edge becomes two triangles meeting at one vertex;
the other two types become four-cycles. This also proves the product means in the table.
Consequently,
\begin{align}
 \Var_{P_k}S_4={}&N_4(1-\tau^2)\notag\\
 &+2\binom n6\{180(s_6-\tau^2)+45(u-\tau^2)+180(v-\tau^2)\}\notag\\
 &+2\binom n5\{60(\delta^2-\tau^2)+30(\tau-\tau^2)\}
       +6\binom n4(\tau-\tau^2).\label{eq:four-var}
\end{align}
By Lemma~\ref{lem:moments}, H\"older's inequality, and \eqref{eq:spectral-basic},
\[
 u\le C_L\tau^2,\qquad
 |v|\le C_L\sqrt h\sqrt{s_6}\,\tau\le C_L\sqrt h\,\tau^{7/4}.
\]
Dropping negative terms in \eqref{eq:four-var} and using $h\le b^2\le1$,
we obtain, for $\tau>0$ and all sufficiently large $n$,
\begin{equation}\label{eq:four-ratio}
 \frac{\Var_{P_k}S_4+\Var_QS_4}{(N_4\tau)^2}
 \le C_L\left\{
 \frac1{A_4}+\frac1{nA_4^{1/4}}+\frac1{n^2}
 +\frac1{n^{3/2}A_4^{1/8}}+\frac1{nA_4^{1/2}}
 \right\}.
\end{equation}
For example, after division by $n^8\tau^2$, the terms $n^6s_6$ and $n^6|v|$
are bounded by $1/(nA_4^{1/4})$ and $1/(n^{3/2}A_4^{1/8})$, respectively;
both $n^5\delta^2$ and $n^5\tau$ are controlled by the last term.
Rejecting when $S_4>N_4\tau/2$ and applying Chebyshev's inequality proves the claim.
\end{proof}

\subsection{Preserving geometric information by a finite partition}
Geometric detection does not require knowledge of the true latent positions. We replace positions by finitely many spherical regions
and search over all assignments of vertices to regions for one consistent with the observed edges.
The key is that $\exp(C_Ld)$ regions suffice to preserve a fixed fraction of the squared $L^2$ norm of a fixed-degree kernel.

\begin{lemma}\label{lem:partition}
There is a constant $c_L$ depending only on $L$ such that every nonzero kernel $k$ as above admits a measurable partition
$S^{d-1}=B_1\cup\cdots\cup B_J$ with $\log J\le c_Ld$.
Let
\[
 g(x,y)=\mathbb E[k(\langle X,Y\rangle)\mid X\in B_a,\ Y\in B_b],
 \qquad x\in B_a,\ y\in B_b,
\]
with value zero on regions of measure zero. Then
$\|g\|_\infty\le b$ and $\|k-g\|_{L^2(\mu_d\otimes\mu_d)}^2\le h/32$.
\end{lemma}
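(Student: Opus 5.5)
The plan is to view $g$ as the image of $k$ under a tensor product of conditional expectations, reduce the $L^2$ error to how well each spherical harmonic of degree at most $L$ is approximated by its cell averages, and then choose the partition by averaging over a random rotation of a fixed fine partition. This averaging is what lets us exploit rotation invariance and the Funk--Hecke formula.

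\emph{Operator reformulation and the bound $\|g\|_\infty\le b$.} Let $\Pi$ be the conditional expectation onto the $\sigma$-field generated by $B_1,\dots,B_J$, acting on $L^2(\mu_d)$. Then $g=(\Pi\otimes\Pi)k$, and $\|g\|_\infty\le b$ holds because $g$ is an average of values of $k$. Expand the kernel in eigenfunctions,
\[
 k(\langle x,y\rangle)=\sum_{\ell=1}^L\frac{a_\ell}{M_{\ell,d}}\sum_{r}Y_{\ell,r}(x)Y_{\ell,r}(y).
\]
Then write $k-g=\bigl((I-\Pi)\otimes I\bigr)k+\bigl(\Pi\otimes(I-\Pi)\bigr)k$. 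Orthonormality of the $Y_{\ell,r}$ in the untouched variable gives
\[
 \|k-g\|^2\le 2\sum_{\ell,r}\frac{a_\ell^2}{M_{\ell,d}^2}\bigl\|(I-\Pi)Y_{\ell,r}\bigr\|^2+(\text{same}),
\]
where the second term uses that $\Pi$ is a contraction. It therefore suffices to find a partition with
\[
 \sum_\ell\frac{a_\ell^2}{M_{\ell,d}}\cdot\frac1{M_{\ell,d}}\sum_r\|(I-\Pi)Y_{\ell,r}\|^2\le\frac{h}{128}.
\]
Note that we only need this weighted average over the basis, not a bound uniform over all harmonics.

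\emph{Choosing the partition by a random rotation.} Fix an $\varepsilon$-net of $S^{d-1}$ of size at most $(3/\varepsilon)^d$ and let $B_j$ be its Voronoi cells, each of diameter at most $2\varepsilon$. Then $\log J\le d\log(3/\varepsilon)$, which is of the required form once $\varepsilon=\varepsilon_L$ is a constant depending only on $L$. Rotate the partition by a Haar-random $R$. We have
\[
 2\|(I-\Pi_R)\varphi\|^2=\sum_j\mu_d(B_j)^{-1}\iint_{B_j\times B_j}(\varphi(x)-\varphi(x'))^2,
\]
and averaging over $R$ turns this into an integral of $(\varphi(x)-\varphi(x'))^2$ against a rotation-invariant measure on pairs with $|x-x'|\le2\varepsilon$. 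By Funk--Hecke, for $\varphi\in\mathcal H_{\ell,d}$ with unit norm, the contribution of pairs at inner product $t$ is $2(1-P_{\ell,d}(t))$. So the expected error is at most $2\max_{t\ge1-2\varepsilon^2}(1-P_{\ell,d}(t))$ times a total mass that must be checked to be $O(1)$; this bookkeeping uses $\sum_j\mu_d(B_j)=1$. Since the expectation over $R$ of the weighted sum above is bounded by $h$ times the largest multiplier, some fixed $R$ achieves the bound.

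\emph{Main obstacle: the Funk--Hecke multiplier.} The crux is a bound $1-P_{\ell,d}(t)\le C_\ell(1-t)$ for $t$ near $1$, uniform in $d$. The naive derivative is $P_{\ell,d}'(1)=\ell(\ell+d-2)/(d-1)$, which is bounded by $\ell(\ell+1)$ uniformly in $d$; heuristically this is the $L^2$ gradient of order $\sqrt{\ell d}$ spread over $d-1$ directions. To make this rigorous I would first try the recurrence from Lemma~\ref{lem:moments}, or the Gegenbauer derivative identity, to show $\sup_{[-1,1]}|P_{\ell,d}'|=P_{\ell,d}'(1)$ for fixed $\ell\le L$. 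Then $1-P_{\ell,d}(t)\le L(L+1)\cdot2\varepsilon^2$, and taking $\varepsilon_L$ small makes this at most $1/256$, closing the argument. The remaining concern is the Voronoi-cell mass normalization in the averaging identity. If it is awkward, I would instead compare $\Pi_R$ with the averaging operator over caps of radius $2\varepsilon$, whose Funk--Hecke multiplier is exactly an average of $P_{\ell,d}$ over the cap.
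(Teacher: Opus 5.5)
Your proposal is correct and follows essentially the paper's route. Both arguments use conditional averaging over a Voronoi partition of an $\varepsilon$-net in each variable, the split $(I-\Pi)\otimes I+\Pi\otimes(I-\Pi)$, and a uniform-in-$d$ bound $1-P_{\ell,d}(t)\le C_L(1-t)$. The paper obtains that bound from $|P_{\ell,d}|\le1$ together with a Markov-type derivative inequality for fixed-degree polynomials.

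The random rotation is unnecessary. You only need the basis average $M_{\ell,d}^{-1}\sum_r\|(I-\Pi)Y_{\ell,r}\|^2$, and the within-cell variance formula combined with the addition formula gives this quantity deterministically as $\sum_j\mu_d(B_j)^{-1}\iint_{B_j\times B_j}(1-P_{\ell,d}(\langle x,x'\rangle))\le\max_{t\ge1-2\varepsilon^2}(1-P_{\ell,d}(t))$. The total mass here is exactly $\sum_j\mu_d(B_j)=1$.
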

\begin{proof}
The addition formula gives
\[
 \|k(\langle x,\cdot\rangle)-k(\langle z,\cdot\rangle)\|_2^2
 =2h\sum_{\ell=1}^L w_\ell[1-P_{\ell,d}(\langle x,z\rangle)],
 \quad w_\ell=\frac{a_\ell^2}{M_{\ell,d}h},\quad \sum_\ell w_\ell=1.
\]
For polynomials of fixed degree, the derivative norm is controlled by the supremum norm:
interpolate at $L+1$ fixed distinct nodes and differentiate to obtain
$\|p'\|_\infty\le C_L\|p\|_\infty$.
The preceding squared distance is therefore at most $C_Lh\|x-z\|^2$.

Take a maximal $\varepsilon$-separated set. The Euclidean balls of radius $\varepsilon/2$ about its points are disjoint
and contained in a ball of radius $1+\varepsilon/2$. Comparing volumes bounds the number of points by
$(1+2/\varepsilon)^d$. Maximality also makes this an $\varepsilon$-net.
Partition the sphere by nearest net points, resolving ties in a fixed order.
Replacing each row by the row at its net point incurs squared error at most $C_Lh\varepsilon^2$.

Let $P_x,P_y$ be the orthogonal projections given by conditional averaging over this partition in the respective coordinates.
Conditional expectation is the best approximation in $L^2$, so
$\|k-P_xk\|_2^2,\|k-P_yk\|_2^2\le C_Lh\varepsilon^2$.
The projections commute, and
\[
 k-P_xP_yk=(I-P_x)k+P_x(I-P_y)k.
\]
The two terms on the right are orthogonal. The total error is therefore at most $2C_Lh\varepsilon^2$.
Choose $\varepsilon$ sufficiently small, depending only on $L$.
Then $P_xP_yk=g$, and conditional averaging preserves the supremum bound.
\end{proof}

\begin{lemma}\label{lem:scan}
Suppose a symmetric block kernel $g$ satisfies the error bound of Lemma~\ref{lem:partition},
with $J$ regions. Let $N=\binom n2$.
There is a constant $C_b$ depending only on $b$ and a graph test with total error at most
\begin{equation}\label{eq:scan-error}
 \exp\{n\log J-Nh/16\}+\frac{C_b}{nh}.
\end{equation}
In particular, strong detection holds if $G\to\infty$.
\end{lemma}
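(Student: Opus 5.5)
The test is a scan over assignments of vertices to regions, combined with a variance-based fallback. For each labelling $\sigma:[n]\to[J]$, define the block kernel values $g_{ij}(\sigma)=g(B_{\sigma(i)},B_{\sigma(j)})$, where $g(a,b)$ denotes the common value of $g$ on $B_a\times B_b$. Form the statistic
\[
 T_\sigma=\sum_{i<j}\bigl\{Y_{ij}\,g_{ij}(\sigma)-\tfrac12 g_{ij}(\sigma)^2\bigr\}.
\]
This is the linearized log-likelihood for means $g_{ij}(\sigma)$ against symmetric signs. The test rejects $Q$ if some $\sigma$ satisfies $\sum_{i<j}Y_{ij}g_{ij}(\sigma)>\tfrac12\sum_{i<j}g_{ij}(\sigma)^2+\theta_\sigma$ for a suitable threshold. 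A cleaner choice is to reject if $\max_\sigma\bigl(\sum Y_{ij}g_{ij}-\tfrac14\sum g_{ij}^2\bigr)>Nh/16$ or so; the exact constants will be tuned so that both error terms in \eqref{eq:scan-error} come out.

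\emph{Type I error.} Under $Q$ the $Y_{ij}$ are independent Rademacher variables and the $g_{ij}$ are deterministic given $\sigma$. Hoeffding's bound gives $Q(\sum Y_{ij}g_{ij}>t)\le\exp\{-t^2/(2\sum g_{ij}^2)\}$. Take the statistic $\sum Y g-\frac12\sum g^2$ with threshold $s$. Since $(s+\frac12 S)^2/(2S)\ge s+S/8\ge s$ with $S=\sum g_{ij}^2$, each $\sigma$ has error at most $e^{-s}$. A union bound over $J^n$ labellings then gives $\exp\{n\log J-s\}$, and I will set $s=Nh/16$.

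\emph{Type II error.} Under $P_k$, use the true labelling $\sigma^*(i)=$ the region containing $X_i$. Write $g_{ij}=g(X_i,X_j)$ and $k_{ij}=k(\langle X_i,X_j\rangle)$. Conditional on the latent points, $\mathbb E\sum Y_{ij}g_{ij}-\tfrac12\sum g_{ij}^2=\sum(k_{ij}g_{ij}-\tfrac12g_{ij}^2)=\tfrac12\sum k_{ij}^2-\tfrac12\sum(k_{ij}-g_{ij})^2$. Taking expectations over the points, $\mathbb E k_{ij}^2=h$ and $\mathbb E(k-g)^2\le h/32$, so the mean is at least $N(h/2-h/64)$, which exceeds $2Nh/16$ with room to spare. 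It remains to show that the statistic falls below $Nh/16$ with probability $O(1/(nh))$, which I will do by Chebyshev. The conditional variance of $\sum Y_{ij}g_{ij}$ given the points is at most $Nb^2$. The variance of the U-statistic $U=\sum_{i<j}\phi(X_i,X_j)$, with $\phi=kg-\frac12g^2$, is bounded by Hoeffding's decomposition as $\Var U\le C\{N\mathbb E\phi^2+n^3\mathbb E\bar\phi(X)^2\}$, where $\bar\phi(x)=\mathbb E\phi(x,Y)$. Here $\mathbb E\phi^2\le C_b\,\mathbb E(k^2+g^2)\le C_bh$, because $|g|,|k|\le b$ and $\mathbb E g^2\le\mathbb E k^2$ (conditional averaging is a contraction); similarly $\mathbb E\bar\phi^2\le C_b h$.

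Dividing by the squared mean gap $(cNh)^2$ gives the bound $C_b\{1/(Nh^2)+n^3h/(n^4h^2)+1/(Nh)\}$. The middle term is $C_b/(nh)$. The first term is the main obstacle: $1/(n^2h^2)\le 1/(nh)$ requires $nh\ge1$, but the bound is trivial otherwise since the claimed total error then exceeds one (after enlarging $C_b\ge1$). The third term is also $\le 1/(nh)$. The degree-one term $\bar\phi$ needs no cancellation: the crude bound $\mathbb E\bar\phi^2\le\mathbb E\phi^2\le C_bh$ already suffices. This gives \eqref{eq:scan-error}.

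For the final claim, Lemma~\ref{lem:partition} gives $\log J\le c_Ld$, so the first term is $\exp\{n(c_Ld-(n-1)h/32)\}$. When $G=nh/d\to\infty$, the exponent is at most $-n\,d\,(G/64-c_L)\to-\infty$. Moreover $nh\ge dG\to\infty$, so the second term vanishes as well.
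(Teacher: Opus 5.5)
Your proposal is correct and follows the paper's proof: the same scan statistic $F_z$ with threshold $Nh/16$, an exponential-moment (Hoeffding) bound with a union bound over the $J^n$ assignments for type I, and Chebyshev at the true assignment for type II. The one difference is in type II, where you use a single unconditional Chebyshev via the Hoeffding decomposition of $\phi=kg-\tfrac12g^2$ instead of the paper's two stages (concentrating $V=\sum k_{ij}^2$ and $D=\sum(k_{ij}-g_{ij})^2$, then conditional Chebyshev), which works equally well; note also that $(s+S/2)^2/(2S)\ge s+S/8$ is false when $s<S$, although the bound $\ge s$ you actually need follows from $(s-S/2)^2\ge0$.
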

\begin{proof}
Write $g_{ab}$ for the value on blocks $a,b$. For each assignment
$z=(z_1,\ldots,z_n)\in\{1,\ldots,J\}^n$, let
\[
 F_z=\sum_{i<j}\left(g_{z_i z_j}Y_{ij}-\frac12 g_{z_i z_j}^2\right).
\]
Reject $Q$ when $\max_zF_z\ge Nh/16$.
Under $Q$, $\mathbb E_Qe^{F_z}\le1$, since
$\cosh(t)\le e^{t^2/2}$. Markov's inequality and a union bound over the $J^n$ assignments
give the first term in \eqref{eq:scan-error}.

Under $P_k$, consider the assignment given by the regions containing the true positions. Set
\[
 V=\sum_{i<j}k(\langle X_i,X_j\rangle)^2,\qquad
 D=\sum_{i<j}(k(\langle X_i,X_j\rangle)-g(X_i,X_j))^2.
\]
Every row has squared $L^2$ norm $h$, so the squared kernel terms on different edges are uncorrelated even when they share a vertex.
Thus $\mathbb EV=Nh$ and $\Var V\le Nb^2h$.
Also, $\mathbb ED\le Nh/32$, although the error kernel need not have constant row integrals.
If $U_{ij}=(k_{ij}-g_{ij})^2$, then
$0\le U_{ij}\le4b^2$, $\mathbb EU_{ij}^2\le4b^2\mathbb EU_{ij}\le C_bh$.
There are $O(n^3)$ pairs of edges sharing a vertex. Cauchy--Schwarz gives
$\Var D\le C_bn^3h$. Outside an event of probability at most $C_b/(nh)$, therefore,
\[
 Nh/2\le V\le2Nh,\qquad D\le Nh/4.
\]
Conditional on the positions, the score of the true assignment has mean $(V-D)/2\ge Nh/8$
and variance at most
$\sum g(X_i,X_j)^2\le2V+2D\le(9/2)Nh$.
Conditional Chebyshev bounds the probability that it falls below $Nh/16$ by $C_b/(Nh)$.
Combining the bounds proves \eqref{eq:scan-error}.
When $G=nh/d\to\infty$, we have $nh\to\infty$ and
$n\log J/(Nh)\le C_L/G\to0$, so both error terms vanish.
\end{proof}

\begin{proposition}\label{prop:detect}
If $\max\{A_3,A_4,G\}\to\infty$, then $\TV(P_k,Q)\to1$.
\end{proposition}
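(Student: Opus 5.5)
The proof assembles Lemmas~\ref{lem:cycle-detection}, \ref{lem:partition} and \ref{lem:scan} through a subsequence argument. It suffices to show that every subsequence has a further subsequence along which $\TV(P_k,Q)\to1$. For each $n$ the total variation is at least $1$ minus the sum of the type I and type II errors of any test. So we only need, along such a subsequence, one test whose total error tends to zero.

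Pass to a subsequence along which $\max\{A_3,A_4,G\}\to\infty$, and then to a further subsequence along which one of the following holds:
\begin{itemize}
\item[(i)] $A_4\to\infty$;
\item[(ii)] $A_4$ stays bounded and $A_3\to\infty$;
\item[(iii)] $A_3,A_4$ stay bounded and $G\to\infty$.
\end{itemize}
This trichotomy is exhaustive after extraction. In case (i), the four-cycle statistic of Lemma~\ref{lem:cycle-detection} gives strong detection directly. In case (ii), we eventually have $A_4\le C\le A_3$, so the triangle part of Lemma~\ref{lem:cycle-detection} applies. The hypothesis $A_4\le A_3$ there was used only to make $\sqrt{A_4}/(nA_3)$ vanish, and boundedness of $A_4$ gives this as well.

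In case (iii), $G\to\infty$ forces $h>0$, so $k\neq0$. Lemma~\ref{lem:partition} then supplies a partition with $\log J\le c_Ld$ and a block kernel $g$ satisfying the required error bound. Lemma~\ref{lem:scan} gives a scan test with total error bounded by \eqref{eq:scan-error}. Its final sentence already shows both terms vanish when $G\to\infty$, since $nh\ge G\cdot d\ge G\to\infty$ and $n\log J/(Nh)\le C_L/G$.

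Finally, we record the standard fact that a sequence of tests $\phi_n$ with $Q(\phi_n=1)+P_k(\phi_n=0)\to0$ forces
\[
\TV(P_k,Q)\ge P_k(\phi_n=1)-Q(\phi_n=1)\to1.
\]
Combined with the subsequence principle, this proves the proposition.

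The main point of care is case (ii), where the hypothesis of Lemma~\ref{lem:cycle-detection} is not literally $A_4\le A_3$. I would handle it either by the boundedness remark above or by noting that if $A_4>A_3\to\infty$ then case (i) applies. Nothing further seems to be an obstacle, since all the quantitative work sits in the three preceding lemmas.
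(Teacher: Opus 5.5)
Your proof is correct and is essentially the paper's argument: the paper also dispatches to the triangle test, the four-cycle test, or the scan test of Lemma~\ref{lem:scan} according to which signal dominates, but it selects at each $n$ the largest of $A_3,A_4,G$ (so that $A_4\le A_3$ holds automatically on the triangle branch) and argues along each of the three branches, where you instead extract subsequences via a boundedness trichotomy. Your concern about case (ii) is unnecessary, since $A_4$ bounded and $A_3\to\infty$ already give the literal hypothesis $A_4\le A_3$ for all large $n$.
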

\begin{proof}
For each $n$, select the largest of the three signals, breaking ties in a fixed order.
If $A_3$ is selected, then $A_4\le A_3$, and we use the triangle test from Lemma~\ref{lem:cycle-detection}.
If $A_4$ is selected, use the four-cycle test; if $G$ is selected, use Lemma~\ref{lem:scan}.
Along every branch selected infinitely often, the corresponding signal diverges, so the error tends to zero.
There are only three branches, and hence the error of the entire sequence tends to zero.
\end{proof}

\section{Impossibility: from edge dependence to the full distribution}\label{sec:nondetection}
We prove the impossibility direction of Theorem~\ref{thm:main}.
The aim is to control the joint law of all $\binom n2$ edges when all three sample-size scales dominate $n$.
Computing a few cycle moments is insufficient; we need an expansion whose order grows with $n$ while its remainder remains controlled.

The proof has three steps. First, organize cumulants by their multigraphs of edge occurrences
and control the bounded-cycle-rank part by suppressing long paths.
Next, use higher-order integration by parts on the product of spheres to control the remaining coefficients and the remainder at growing order.
Finally, prove a finite-order relative entropy inequality and return to Bernoulli graphs through an edgewise random map.
For the first two steps, $k$ is centered, has degree at most a fixed $L$, and satisfies $\|k\|_\infty\le b<1$;
the last step handles normalization and general dense edge probabilities.
\subsection{The graph expansion of cumulants}\label{sec:cumulants}
An impossibility proof must control the joint law of all edges, rather than only finitely many cycles.
We first treat edge collections whose size may grow but whose complexity remains bounded.
Their long paths can be summed by operator convolution, leaving a finite part controlled by the cubic and fourth traces.
The next subsection treats the remaining edge collections with a different estimate.

\subsubsection{Cumulants and graph support}
For real random variables $Z_1,\ldots,Z_m$ with all moments, define their joint cumulant by
\begin{equation}\label{eq:cumulant-def}
 \cum(Z_1,\ldots,Z_m)
 =\sum_{\pi\in\mathcal P([m])}(-1)^{|\pi|-1}(|\pi|-1)!
      \prod_{B\in\pi}\mathbb E\prod_{j\in B}Z_j.
\end{equation}
Here $\mathcal P([m])$ is the set of partitions of $[m]=\{1,\ldots,m\}$,
and $|\pi|$ is the number of blocks. A cumulant vanishes if the variables split into two nonempty independent groups.
For bounded variables, for instance, this follows directly from the logarithm of the moment generating function:
independence splits the logarithm into two functions with no mixed variables, and cumulants are its mixed derivatives.
Formula~\eqref{eq:cumulant-def} follows by expanding the logarithm.

Let $\theta_{ij}=k(\langle X_i,X_j\rangle)$.
Allowing a variable to occur repeatedly in a cumulant, represent its occurrences by a loopless multigraph $H$
whose vertex set is a subset of $[n]$. Each distinct edge $f$ has multiplicity $m_f\ge1$.
Write $m=\sum_fm_f$ for the total number of edge occurrences, $v$ for the number of vertices used, and set
\[
 C_H=\cum(\theta_f:\ f\text{ occurs }m_f\text{ times}),\qquad t=m-v+1.
\]
The edge occurrences are distinguished in \eqref{eq:cumulant-def},
but $H$ itself is determined only by its vertex labels and edge multiplicities.
For connected graphs, $t$ is the multigraph cycle rank. Trees have $t=0$;
attaching a new vertex by one edge preserves $t$, while adding an edge between existing vertices increases it by one.

\begin{lemma}\label{lem:cut}
If the underlying simple graph of $H$ is disconnected or has a cut vertex, then $C_H=0$.
Also, $C_H=0$ if $H$ has a vertex of degree one, counting multiplicity.
Throughout, degrees count edge multiplicities, and a cut vertex is a vertex whose deletion increases the number of connected components.
\end{lemma}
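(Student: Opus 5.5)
The plan is to reduce all three assertions to the vanishing criterion: a joint cumulant is zero whenever the variables split into two nonempty mutually independent groups. In each case we will exhibit such a split of the edge occurrences of $H$.

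\emph{Disconnected case.} If the underlying simple graph of $H$ is disconnected, partition the edge occurrences according to the connected component containing them. Occurrences in different components involve disjoint sets of latent points $X_i$. Since the $X_i$ are independent, the two groups of variables $\theta_f$ are independent, and $C_H=0$.

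\emph{Cut-vertex case.} Let $w$ be a cut vertex. Deleting $w$ splits the remaining vertices into at least two classes. Every edge occurrence is incident to $w$ or to a vertex other than $w$, and in either case it lies within the union of $w$ and a single class. Group the occurrences into class $\mathcal A$ and the union of all other classes; both groups are nonempty because $w$ is a cut vertex. The two groups are conditionally independent given $X_w$, but that alone is not enough. The key additional point is rotation invariance. For any fixed rotation $R$, the joint law of $(X_i)_{i\ne w}$ given $X_w=x$ is carried to its law given $X_w=Rx$. So for the random vector $U$ formed by the first group, the conditional law of $U$ given $X_w$ does not depend on $X_w$, since all $\theta$ depend only on inner products. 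Hence $U$ is independent of $X_w$. Similarly, $U$ is independent of the second group $V$ given $X_w$, and its conditional law is constant. Therefore $(U,V)$ factors: $P(U\in\cdot,V\in\cdot)=\mathbb E[P(U\in\cdot\mid X_w)P(V\in\cdot\mid X_w)]=P(U\in\cdot)P(V\in\cdot)$. Independence holds, and $C_H=0$. This is the step needing care: we must be explicit that the conditional law of one side is constant in $X_w$, not just its mean. It is the same mechanism used for the triangle covariance in Lemma~\ref{lem:cycle-detection}.

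\emph{Degree-one case.} Suppose vertex $u$ has degree one, counting multiplicity, so a single occurrence $\theta_{uw}$ touches $u$. If $H$ has only this one occurrence, then $C_H=\mathbb E\theta_{uw}=0$ because $k$ is centered. Otherwise, first suppose some other occurrence exists. Conditional on all $X_i$ with $i\ne u$, the variable $\theta_{uw}$ has a law that depends only on $X_w$, and by rotation invariance this law is fixed. So $\theta_{uw}$ is independent of all other occurrences, none of which involve $X_u$, and the cumulant vanishes by the independence criterion. If instead $w$ becomes isolated or is a cut vertex, this case is already covered by the previous one, but the direct argument suffices anyway.

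We expect no real obstacle. The only subtle point is replacing conditional independence by genuine independence via rotation invariance, and this is exactly what makes the separation into independent groups legitimate in the cumulant formula.
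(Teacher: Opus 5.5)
Your proof is correct and matches the paper's: disconnected supports give independent groups, and at a cut vertex the conditional independence given $X_w$ becomes genuine independence because rotating all latent points on one side shows that side's entire conditional law, not just its mean, is constant in $X_w$. The only difference is in the degree-one case. The paper notes that every term of the cumulant formula contains a moment factor with the leaf edge, which vanishes when the leaf is integrated out (by centering); you instead show $\theta_{uw}$ is independent of all other occurrences and apply the splitting criterion, needing centering only when $m=1$, and both arguments work.
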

\begin{proof}
Disconnected support splits the variables into independent groups. For a cut vertex $x$, split the remaining components into two nonempty sides,
and assign each edge variable to its side. Conditional on $X_x$, the two vectors of variables are independent.
For any spherical points $u,u'$, choose an orthogonal transformation sending $u$ to $u'$.
Applying it simultaneously to all latent points on one side preserves their uniform law and all inner products.
Thus the entire conditional law on each side, and not just its mean, is independent of $X_x$.
The two sides are therefore unconditionally independent, and the cumulant vanishes.

If $x$ has degree one, then in every partition $\pi$,
the unique moment factor containing its incident edge vanishes upon integration over $X_x$.
Every term in \eqref{eq:cumulant-def} is consequently zero.
\end{proof}

\subsubsection{Summation at fixed cycle rank}
The next lemma allows an arbitrarily large number of edges $m$, fixing only an upper bound on the cycle rank.
The factorial weight will arise naturally from the higher-order expansion in the next subsection.
For example, join two endpoints by three internally vertex-disjoint paths
of lengths $r_1,r_2,r_3\ge2$.
There are $m=r_1+r_2+r_3$ edges and $v=m-1$ vertices, so the cycle rank is always two.
Integrating the internal vertices of each path replaces it by the corresponding operator convolution kernel.
The paths can be arbitrarily long, but the remaining structure always has two endpoints and three connections.
We use the same decomposition for general graphs of bounded cycle rank.

\begin{lemma}\label{lem:core-sum}
Fix an integer $t_0\ge1$ and a real number $w\ge1$. For $A_4$ sufficiently small,
\begin{equation}\label{eq:core-sum}
 \sum_{\substack{H\text{ connected}\\m\ge3,\ t\le t_0}}
       w^{m-1}\frac{m\,C_H^2}{\prod_fm_f!}
 \le C_{L,b,t_0,w}\bigl(A_3+A_4^{1/6}\bigr).
\end{equation}
The sum is over distinct labeled multigraphs with vertices in $[n]$ and no isolated vertices,
without an additional ordering of the edge occurrences.
\end{lemma}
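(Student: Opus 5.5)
The plan is to use Lemma~\ref{lem:cut} to reduce the sum to multigraphs with a very rigid shape. Then I bound each cumulant by a product of operator-convolution kernels along the paths of that shape, and count labelings.

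\emph{Structure.} By Lemma~\ref{lem:cut}, only multigraphs $H$ with two properties contribute: the underlying simple graph is connected with no cut vertex, and every vertex has degree at least two, counting multiplicity. The cycle rank is $t=m-v+1\le t_0$. Each extra parallel copy of an edge raises $t$ by one, so $m_f\le t_0+1$; thus $\prod_f m_f!$ is bounded and plays no real role.

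If $t=1$, then $H$ is a simple cycle of length $m\ge3$. If $t\ge2$, let the \emph{core} be the set of vertices of degree at least three. Then $H$ consists of $c$ core vertices joined by $e$ internally disjoint paths of lengths $r_1,\dots,r_e$, where a multi-edge counts as several paths of length one. Degree counting gives $e-c=t-1$ and $2e\ge3c$, so $c\le2(t_0-1)$ and $e\le3(t_0-1)$. Hence there are $O_{t_0}(1)$ core types, and $H$ is determined by its type, the lengths $r_i$, and a vertex labeling.

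\emph{Cumulant of a cycle, and $t=1$.} For a cycle, every block of a partition with at least two blocks is a forest with a leaf. The corresponding moment vanishes after integrating the leaf, so $C_H=\tr T^m$. There are at most $n^m/(2m)$ labeled $m$-cycles, so the $t=1$ part of \eqref{eq:core-sum} is at most
\[
 \sum_{m\ge3} w^{m-1} n^m (\tr T^m)^2 .
\]
The term $m=3$ is $\le w^2A_3$ and the term $m=4$ is $\le w^3A_4$. For $m\ge5$, use $|\tr T^m|\le\|T\|_{\mathrm{op}}^{m-4}\tau$ to bound the term by $w^{m-1}A_4\rho^{m-4}$. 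Since $\rho\le A_4^{1/4}$ is small, the geometric series converges to $O(A_4)$.

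\emph{Cumulant bound for $t\ge2$.} Let $E_\pi$ be a nonzero moment factor in \eqref{eq:cumulant-def}. Its edge set must itself have minimum degree two. A path of simple edges therefore cannot be split among blocks, since otherwise some block has a leaf. Only parallel copies can be distributed, so the number of contributing partitions is $O_{t_0}(1)$.

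For each contributing block, integrate the internal vertices of every path. This replaces a path of length $r$ by the kernel of $T^r$. By Lemma~\ref{lem:moments}, which applies to $T^r$ because operator powers add no new harmonic degrees, every $L^{2q}$ norm of that kernel is at most $C(\tr T^{2r})^{1/2}$. Applying H\"older over the finitely many factors, each remaining core integral is bounded by the product of these norms. A path used in several blocks only increases multiplicities, and its factor is again bounded by Lemma~\ref{lem:moments}. This gives
\[
 C_H^2\le C_{L,b,t_0}\prod_{i=1}^e \tr T^{2r_i}.
\]

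\emph{Counting, and the main obstacle.} The number of labelings is at most $n^v$ with $v=c+\sum_i(r_i-1)$. Now use $\tr T^{2r}\le h\,\|T\|_{\mathrm{op}}^{2(r-1)}=h\rho^{r-1}n^{-(r-1)}$. The total contribution of a fixed type, including the weight $w^{m-1}m$, is then at most
\[
 C\,n^c h^e\prod_{i=1}^e\sum_{r_i\ge1} m\,(w\rho)^{r_i-1}\le C'n^ch^e ,
\]
since $\rho$ is small and the factor $m$ is absorbed by the geometric sums. By Lemma~\ref{lem:moments}, $h\le C\tau^{1/3}=CA_4^{1/6}n^{-2/3}$. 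Therefore
\[
 n^ch^e\le C A_4^{e/6}\,n^{c-2e/3}\le CA_4^{1/6},
\]
using $2e\ge3c$, $e\ge1$ and $A_4\le1$. Summing over the finitely many types proves \eqref{eq:core-sum}.

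I expect the main obstacle to be the cumulant bound. I must check carefully three things there:
\begin{itemize}
 \item nonvanishing blocks really cannot split simple paths, so the partition count stays bounded;
 \item the H\"older step over shared core vertices gives exactly one factor $(\tr T^{2r_i})^{1/2}$ per path, rather than extra factors of $h$ or lost powers;
 \item degenerate cores, such as two core vertices joined only by multi-edges, fit the same counting.
\end{itemize}
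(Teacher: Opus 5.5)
Your proposal is correct and follows essentially the paper's route: you reduce via Lemma~\ref{lem:cut}, treat cycles through $C_H=\tr T^m$, suppress degree-two vertices so that each path is an unsplittable unit replaced by the kernel of $T^{r}$, apply H\"older with Lemma~\ref{lem:moments}, count $n^v$ labelings, and finish with $h^3\le C\tau$ and $2e\ge3c$. Your uniform bound $\tr T^{2r}\le h\|T\|_{\mathrm{op}}^{2(r-1)}$ for every path is slightly cruder than the paper's $\tau\|T\|_{\mathrm{op}}^{2r-4}$ for paths of length at least two, and you absorb the two-vertex case instead of treating it separately, but you still get $A_4^{1/6}$. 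One point should be stated explicitly: every suppressed path has two distinct core endpoints, because a path returning to its start would make that vertex a cut vertex, and your H\"older step needs this (on the diagonal the kernel of $T^r$ is the constant $\tr T^r$, which is not controlled by $(\tr T^{2r})^{1/2}$).
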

\begin{proof}
By Lemma~\ref{lem:cut}, only supports without cut vertices need be considered.
First separate two cases with no nontrivial core.

If the support has only two vertices, then $m=t+1\le t_0+1$.
By \eqref{eq:cumulant-def} and Lemma~\ref{lem:moments},
$|C_H|\le C_{L,t_0}h^{m/2}$.
Indeed, each moment factor of size $r$ is at most $C_{L,t_0}h^{r/2}$,
and the block sizes sum to $m$.
Since $h\le b^2$ and $3\le m\le t_0+1$, we have $h^m\le C_{b,t_0}h^3$, so the total contribution is at most
$C n^2h^3\le C n^2\tau=C\sqrt{A_4}$.

If all degrees equal two and there are at least three vertices,
then $H$ is a simple cycle. Every nontrivial partition in the cumulant
puts two adjacent edges in different blocks; one moment factor then has a degree-one vertex and vanishes.
Hence $C_H=\tr T^m$. Triangles contribute $O_w(A_3)$ in total.
For $m\ge4$, the finite spectral sum gives
\[
 |\tr T^m|\le \tau\|T\|_{\mathrm{op}}^{m-4},
 \qquad n^m|\tr T^m|^2\le A_4\rho^{m-4}.
\]
There are at most $n^m$ labeled $m$-cycles, so these cycles contribute at most
\[
 C_w A_4\sum_{m\ge4}m(w\rho)^{m-4}\le C_wA_4
\]
provided $w\rho\le1/2$.

Now suppose the support has at least three vertices and is not one of the cycles above.
Suppress the degree-two vertices successively. Since the support has no cut vertex,
each such vertex is incident to two distinct edges of multiplicity one: if it were joined only to a single neighbor by a repeated edge,
that neighbor would be a cut vertex. Call the remaining vertices core vertices, and denote their number by $v_0$.
There are $R_0$ direct edges between core vertices, counted with multiplicity, and $p$ further paths
of length at least two. Let their lengths be $r_1,\ldots,r_p$,
and set $z_i=r_i-2$, $z=\sum_i z_i$, and $B=R_0+p$.
Then
\begin{equation}\label{eq:core-counts}
 v=v_0+p+z,\qquad m=R_0+2p+z,\qquad
 t=B-v_0+1,\qquad 3v_0\le2B.
\end{equation}
Thus $v_0\le2(t_0-1)$ and $B\le3(t_0-1)$.
This case is empty when $t_0=1$.

Each suppressed path has distinct endpoints. Otherwise it meets the rest of the graph at only one core vertex,
which would be a cut vertex; if there were no other vertices, the original graph would be a cycle.
Adding a repeated edge to a cycle creates two distinct core endpoints, so this gives no exception.
Thus every convolution kernel below is evaluated at two independent spherical points.

In every nonzero term of \eqref{eq:cumulant-def}, all edges of a given path belong to one block;
otherwise a degree-one vertex appears at a separation point. Each long path may therefore be treated as one unit,
and each occurrence of a direct edge as another unit.
Nonzero partitions are partitions of these $B$ units, so their number and the coefficients $(|\pi|-1)!$ depend only on $t_0$.
Within each moment factor, integrate the internal vertices of a path of length $r$ to replace it by the integral kernel of $T^r$.
This is still an inner-product polynomial of degree at most $L$, and
\[
 \|T^r\|_{\mathrm{HS}}^2
   =\sum_\ell M_{\ell,d}(a_\ell/M_{\ell,d})^{2r}
   \le \tau\|T\|_{\mathrm{op}}^{2r-4},\qquad r\ge2.
\]
Apply H\"older's inequality to the finitely many factors on the core and then Lemma~\ref{lem:moments}
to obtain
\begin{equation}\label{eq:core-moment}
 C_H^2\le C_{L,t_0}h^{R_0}\tau^p\|T\|_{\mathrm{op}}^{2z}.
\end{equation}
H\"older's inequality does not require independence of the factors. Their individual endpoints are distinct,
which ensures that their squared $L^2$ norms equal the corresponding squared Hilbert--Schmidt norms.

For a fixed core and length vector, first number the core vertices and orient the paths,
then fill in the labels of the core vertices and internal path vertices in order.
The bounded core size gives at most $C_{t_0}n^v$ such encodings.
Every graph has at least one encoding; allowing repeated labels only increases the upper bound.
No additional factor of $m!$ or $v!$ is needed.
The weight satisfies $m/\prod_fm_f!\le m$.
When $A_4\le1$, substituting \eqref{eq:h-cube} into \eqref{eq:core-moment} gives
\begin{align}
 n^vC_H^2
 &\le C_{L,b,t_0}
  n^{\,v_0-2R_0/3-p} A_4^{R_0/6+p/2}\rho^z\notag\\
 &\le C_{L,b,t_0}A_4^{1/6}\rho^z.\label{eq:core-small}
\end{align}
For the second inequality, $v_0\le2(R_0+p)/3$, so the exponent of $n$ is at most $-p/3$;
also $R_0+p\ge1$, so the exponent of $A_4$ is at least $1/6$.

Finally, sum over path lengths. With $a=R_0+2p$ and $x=w\rho$,
\[
 \sum_{z_1,\ldots,z_p\ge0}(a+\textstyle\sum_i z_i)
       w^{a+\sum_i z_i-1}\rho^{\sum_i z_i}
 =w^{a-1}\left\{\frac{a}{(1-x)^p}
             +\frac{px}{(1-x)^{p+1}}\right\}.
\]
This identity follows from the geometric series and its derivative. When $p=0$, the sum has one empty length vector and equals $w^{a-1}a$.
For $w\rho\le1/2$, the right-hand side depends only on $t_0,w$.
There are finitely many core types. Adding the repeated-edge and cycle contributions proves \eqref{eq:core-sum}.
\end{proof}

\subsection{Higher-order integration by parts on the sphere}\label{sec:stein}
The preceding subsection controls cumulants of bounded cycle rank. We now show why terms of large cycle rank also vanish.
The key distinction is between two ways of adding an edge: attaching a new vertex costs an amount controlled by $nh/d$,
whereas adding an edge between existing vertices has a cost that vanishes with the squared $L^2$ norm of the kernel.
The first bound must be independent of the expansion order; the spherical spectral gap and conditional centering at leaves provide exactly this property.
In this subsection and the next, $k$ is centered, has degree at most a fixed $L$, and satisfies $|k|\le b<1$.

\subsubsection{The recursion and its support}
Let $E=\binom{[n]}2$, $N=|E|$, and regard $\theta=(\theta_e)_{e\in E}$ as a random vector.
Distinct edges have zero covariance even when they share an endpoint, as one may first integrate the other endpoint.
Thus $\mathbb E\theta=0$ and $\operatorname{Cov}(\theta)=hI_N$.
On the product of spheres, let $\Delta=\sum_{v=1}^n\Delta_v$ be the Laplace--Beltrami operator and
$Pf=f-\mathbb Ef$. Define $(-\Delta)^{-1}$ on the orthogonal complement of the constants, and set
\[
 D_g f=\sum_v\langle\nabla_v(-\Delta)^{-1}Pf,\nabla_v\theta_g\rangle.
\]
The spherical harmonic eigenvalues of the negative Laplacian are $\ell(\ell+d-2)$ \cite[Section 1.4]{DaiXu}.
The inverse above is therefore bounded on centered square-integrable functions.
All functions used here are finite linear combinations of product spherical harmonics, so differentiation and integration may be performed term by term.
Spherical integration by parts and the chain rule give, for centered $f$ and smooth $F$,
\begin{equation}\label{eq:stein-identity}
 \mathbb E[fF(\theta)]=\sum_{g\in E}\mathbb E[D_gf\,\partial_gF(\theta)].
\end{equation}

Write $\mathbb N_0=\{0,1,2,\ldots\}$. For a multi-index $\alpha\in\mathbb N_0^E$, let $|\alpha|=\sum_g\alpha_g$ and
$\alpha!=\prod_g\alpha_g!$. Starting from $b_{0,e,0}=\theta_e$, define recursively
\begin{equation}\label{eq:array-recursion}
 \begin{split}
 a_{j+1,e,\beta}&=\sum_{g:\,\beta_g>0}D_gb_{j,e,\beta-1_g},\\
 \mu_{j+1,e,\beta}&=\mathbb Ea_{j+1,e,\beta},\qquad
 b_{j+1,e,\beta}=a_{j+1,e,\beta}-\mu_{j+1,e,\beta},
 \qquad |\beta|=j+1.
 \end{split}
\end{equation}
Here $1_g$ is one in coordinate $g$ and zero elsewhere; there is no extra factor $\beta_g$ in the sum.
Define the array norms by
\[
 \|b_j\|^2=\sum_{e\in E}\sum_{|\alpha|=j}\alpha!\,\mathbb E b_{j,e,\alpha}^2,
 \qquad
 \|\mu_j\|^2=\sum_{e\in E}\sum_{|\alpha|=j}\alpha!\,\mu_{j,e,\alpha}^2.
\]
The factorial weight matches the Gaussian expansion below. Let $Z$ have $N$ independent standard Gaussian coordinates,
and let $H_\alpha$ be the multivariate Hermite polynomial in the probabilists' normalization. Then
$\mathbb E[H_\alpha(Z)H_\beta(Z)]=\alpha!\mathbf1_{\alpha=\beta}$.
For coefficients independent of $Z$, the above norm is therefore the $L^2$ norm of the corresponding Hermite expansion.
By \eqref{eq:stein-identity}, $\mu_{1,e,1_g}=h\mathbf1_{e=g}$.

A low-order example illustrates the coefficients. If $k(t)=at$, then
$(-\Delta)^{-1}\theta_{12}=\theta_{12}/[2(d-1)]$. Write
$t_{ij}=\langle X_i,X_j\rangle$. For $g=\{1,3\}$,
\[
 b_{1,\{1,2\},1_{\{1,3\}}}
 =D_{\{1,3\}}\theta_{12}
 =\frac{a^2}{2(d-1)}(t_{23}-t_{12}t_{13}).
\]
This depends only on three vertices and vanishes upon integration over either $X_2$ or $X_3$.
These vertices are precisely the leaves of the path $2\!-\!1\!-\!3$; we retain this property at every order.

Associate a multigraph of edge occurrences to $(e,\alpha)$: the edge $e$ occurs once, and the other occurrences are specified by $\alpha$.
Only connected labels can produce nonzero coefficients. Each coefficient depends only on vertices in its label
and has conditional mean zero in every vertex of degree one, counting multiplicity.
This follows by induction. Conditional averaging $\mathbb E_v$ commutes with $\Delta$ and its centered inverse,
since on the product spherical harmonic basis it retains only terms constant in coordinate $v$.
If the added edge does not touch an existing leaf, centering in that leaf is preserved.
If a new vertex $w$ is attached, then
$\mathbb E_w\nabla_v k(\langle X_v,X_w\rangle)=0$, so centering also holds at the new leaf.
If any leaf remains, the overall mean is already zero; if none remains, subtracting a constant preserves overall centering.
An edge disjoint from the current support has zero gradient pairing, so connectedness is also preserved.

\subsubsection{Adding new vertices and internal edges}
Let $F_j$ be the part of \eqref{eq:array-recursion} that adds an edge with one new endpoint,
and $I_j$ the part whose endpoints both lie in the existing support. Then
\begin{equation}\label{eq:FI}
 b_{j+1}=F_jb_j+PI_jb_j,\qquad \mu_{j+1}=\mathbb E I_jb_j.
\end{equation}
The new-vertex part is automatically centered. More precisely, the array space at order $j$ consists of
arrays of finite spherical harmonic expansions $u=(u_{e,\alpha})_{|\alpha|=j}$ satisfying the following conditions: every coordinate is centered
and depends only on vertices in its label; coordinates with disconnected labels are zero; and for every degree-one vertex $v$,
$\mathbb E_vu_{e,\alpha}=0$. The norm on this space is
$\|u\|^2=\sum_{e,\alpha}\alpha!\,\mathbb E u_{e,\alpha}^2$.
Both $F_j$ and $I_j$ act by the summation rule in \eqref{eq:array-recursion},
with the same factorial-weighted norm on their outputs. The output of $I_j$ may have nonzero mean before centering.

\begin{lemma}\label{lem:operators}
There is a constant $C$ depending only on $L,b$ such that, for every $j\ge0$,
\begin{equation}\label{eq:operator-bounds}
 \|F_j\|\le C\sqrt{nh/d},\qquad
 \|I_j\|\le C(j+1)h^{1/(2L)}.
\end{equation}
\end{lemma}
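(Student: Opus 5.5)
We bound the two operators separately, working in the factorial-weighted array norm. The plan is to write each output coordinate as a sum over the choice of the added edge $g$. We then use Cauchy--Schwarz in the form $\|\sum_g x_g\|^2\le(\#\text{terms})\sum_g\|x_g\|^2$, but only where the terms are genuinely non-orthogonal. The weight $\beta!$ versus $(\beta-1_g)!$ contributes a factor $\beta_g$. For $F_j$ the new edge has multiplicity one, so this factor is $1$. For $I_j$ it is at most $j+1$, and this is the source of the factor $(j+1)$.

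\emph{The new-vertex operator.} Fix an input coordinate $u=u_{e,\alpha}$ with support $V$, and a new edge $g=\{v,w\}$ with $v\in V$ and $w\notin V$. Only the gradient in $X_v$ pairs nontrivially, giving
\[
 D_gu=\langle\nabla_v(-\Delta)^{-1}u,\nabla_v k(\langle X_v,X_w\rangle)\rangle .
\]
Since $\mathbb E_w\nabla_vk(\langle X_v,X_w\rangle)=0$, outputs with different new vertices $w$ are orthogonal. Two outputs with the same $w$ but different anchors $v\ne v'$ come from different labels, so they are different coordinates in any case. It therefore suffices to bound, for fixed $v$ and $w$, the quantity $\mathbb E(D_gu)^2$. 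Integrating over $X_w$ first yields the quadratic form
\[
 \int\!\!\int \langle \nabla_v\phi,\Gamma(X_v)\nabla_v\phi\rangle ,\qquad \phi=(-\Delta)^{-1}u,
\]
where $\Gamma(x)=\mathbb E_w[\nabla k\otimes\nabla k]$. By rotational invariance, $\Gamma(x)$ is a multiple of the projection onto the tangent space $T_x$. Its trace equals $\mathbb E|\nabla_x k|^2=\sum_\ell \ell(\ell+d-2)a_\ell^2/M_{\ell,d}\le C_Lhd$, so $\Gamma\le C_Lh$ as a form. Hence
\[
 \mathbb E(D_gu)^2\le C_Lh\,\mathbb E|\nabla_v\phi|^2\le C_Lh\,\langle\phi,-\Delta\phi\rangle\le C_Lh\,\|u\|^2/(d-1),
\]
using the spectral gap $d-1$ of $-\Delta$ on centered functions. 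Summing over at most $n$ new vertices $w$, and over the at most $j+2$ anchors $v$, would lose a factor of $j$. To avoid this I combine the anchors: write $\sum_v\nabla_v\phi\cdot\nabla_v k_{vw}$ and note that for fixed $w$ the relevant sum is $\sum_v\mathbb E|\nabla_v\phi|^2=\langle\phi,-\Delta\phi\rangle$. This gives $\|F_j\|^2\le Cnh/d$ with no dependence on $j$.

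\emph{The internal-edge operator.} Here both endpoints of $g$ lie in the support, so there are at most $(j+2)^2$ choices of $g$, and no new integration produces orthogonality. Raw counting is too crude, so I would proceed as follows. $D_gu$ pairs $\nabla\phi$ with $\nabla\theta_g$, and $\theta_g$ is a polynomial of degree at most $L$ whose gradient satisfies $\|\nabla\theta_g\|_\infty\le C_L b\sqrt d$ only in the sup sense. The smallness must come from $h$. The approach is to use Hölder interpolation together with Lemma~\ref{lem:moments}. Since $\mathbb E\theta_g^2=h$ and $\theta_g$ has bounded degree, the $L^{2q}$ norms of $\theta_g$ and its normalized gradient are controlled by $h^{1/2}$. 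Coefficient functions are polynomials of degree growing with $j$, but each vertex carries degree at most $L$ times its multiplicity. A hypercontractivity-type step, accepting a $h^{1/(2L)}$ rather than $h^{1/2}$ gain, should then give $\mathbb E(D_gu)^2\le Ch^{1/L}\|u\|^2$ per choice of $g$, uniformly.

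To get the count right, the roughly $j^2$ choices of $g$ spread over distinct output labels $\beta=\alpha+1_g$. Each output coordinate receives contributions from at most $j+1$ inputs, and the weight ratio is $\beta_g\le j+1$. Together these give $\|I_j\|\le C(j+1)h^{1/(2L)}$.

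I expect the main obstacle to be this uniform per-edge gain $h^{1/(2L)}$ for internal edges. It requires controlling $\mathbb E[\langle\nabla\phi,\nabla\theta_g\rangle^2]$ without independence between $\theta_g$ and $\phi$. The first thing I would try is to bound $|\nabla_v\theta_g|$ by $C\sqrt d\,|k'|$, and then use Hölder's inequality with the large exponent $q\sim L$ on $k'$. The bound $\mathbb E|k'|^{2q}\lesssim h^{q}$ up to $d$ factors would be compensated by the gap $1/(d-1)$, and reverse Hölder on $\nabla\phi$ would be supplied by the bounded per-vertex degree.
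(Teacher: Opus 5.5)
\paragraph{The new-vertex operator $F_j$.} Your single-coordinate computation is correct and matches the paper's. Rotational invariance makes $\mathbb E_w[\nabla_v\theta_{vw}\otimes\nabla_v\theta_{vw}]$ equal to $a/(d-1)$ times the tangential projection, with $a\le L^2(d-1)h$, and summing over anchors gives the Dirichlet form.

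The gap is that $\|F_j\|$ is an operator norm on arrays. A child coordinate $(e,\beta)$ receives $\sum_g D_g b_{j,e,\beta-1_g}$ from \emph{every} parent obtained by deleting a leaf edge of the child other than $e$, but you only bound the image of one input coordinate. The orthogonality you take from $\mathbb E_w\nabla_v k(\langle X_v,X_w\rangle)=0$ does not help. For outputs of one input it is irrelevant, since different $w$ already give different coordinates. For contributions of different parents to the same coordinate it is false: the parent missing leaf $w_1$ still contains leaf $w_2$, so the contribution with new vertex $w_1$ depends on $X_{w_2}$ through its parent coefficient.

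Concretely, take $k(t)=at$, write $t_{xy}=\langle X_x,X_y\rangle$, and take the child star centered at $v$ with output edge $\{v,w_0\}$ and leaves $w_1,w_2$. The two contributions are proportional, with the same constant, to $t_{vw_2}t_{w_0w_1}+t_{vw_0}t_{w_1w_2}-2t_{vw_0}t_{vw_1}t_{vw_2}$ and to its image under $w_1\leftrightarrow w_2$. Their correlation is close to $1/2$.

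Cauchy--Schwarz over the $r\le l'$ parents therefore costs up to a factor $j+1$, so your argument gives only $\|F_j\|\le C\sqrt{(j+1)nh/d}$. This $j$-dependence matters: Proposition~\ref{prop:array-small} iterates up to order $R\asymp\log n$ with no rate assumed on $G\to0$.

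The missing idea is the leaf-gap inequality. Conditional centering of a parent at each of its $l$ leaves forces every product harmonic in it to be nonconstant in all $l$ leaf coordinates. Hence $\mathbb E\sum_v|\nabla_vU|^2\le\mathbb Ef^2/(\max(1,l)(d-1))$, and since $r\le l'\le l+1\le2\max(1,l)$, this exactly absorbs the Cauchy--Schwarz factor.

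\paragraph{The internal-edge operator $I_j$.} Here the $h^{1/(2L)}$ gain is left unproved, and the H\"older/hypercontractivity route you suggest cannot give a constant uniform in $j$. The harmonic degree of $\phi=(-\Delta)^{-1}u$ in a vertex grows with that vertex's degree in the label, and a star center has degree $j+1$. So the ``bounded per-vertex degree'' you invoke fails, and reverse-H\"older constants grow exponentially in $j$.

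Your starting sup bound is also off. In fact $|\nabla_v k(\langle X_v,X_w\rangle)|=|k'(t)|\sqrt{1-t^2}\le\|k'\|_\infty\le C_Lb$, with no $\sqrt d$. The paper uses exactly this, in the form $|D_gf|^2\le2\|k'\|_\infty^2(|\nabla_vU|^2+|\nabla_wU|^2)$, together with the spectral gap. The smallness comes not from integrability of $k'$ but from $\|k\|_\infty^2\le\min\{b^2,C_Ld^Lh\}$, which follows from the addition formula and Cauchy--Schwarz. Splitting at $d=h^{-1/L}$ then gives $\|k'\|_\infty^2/(d-1)\le C\min\{d^{-1},hd^{L-1}\}\le Ch^{1/L}$.

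Your counting also overshoots. An unweighted Cauchy--Schwarz with a separate factor $\beta_g$, plus a uniform per-edge bound summed over $\binom{j+2}2$ internal edges, gives $(j+1)^2$ rather than $(j+1)$ in $\|I_j\|$. The paper instead uses $|\sum_gu_g|^2\le(j+1)\sum_g|u_g|^2/\beta_g$, where $\beta!/\beta_g=(\beta-1_g)!$ matches the parent weight exactly. It then uses $\sum_{\{v,w\}\subseteq V}(|\nabla_vU|^2+|\nabla_wU|^2)\le(j+1)\sum_v|\nabla_vU|^2$.
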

\begin{proof}
First fix a parent coordinate $f$ whose label has $l$ leaves, and let $U=(-\Delta)^{-1}f$.
Conditional centering at each leaf implies that every nonzero product harmonic term is nonconstant in all these $l$ coordinates.
If $l=0$, overall centering still guarantees at least one nonconstant coordinate. Hence
\begin{equation}\label{eq:leaf-gap}
 \mathbb E\sum_v|\nabla_vU|^2
 \le\frac{\mathbb Ef^2}{\max(1,l)(d-1)}.
\end{equation}
Each leaf coordinate contributes at least $d-1$ to the eigenvalue, so the spectral gap adds over the leaves.

Let
\[
 a=\mathbb E|\nabla_x k(\langle x,Y\rangle)|^2
 =\sum_{\ell=1}^L\ell(\ell+d-2)\frac{a_\ell^2}{M_{\ell,d}}
 \le L^2(d-1)h.
\]
Rotational invariance makes the gradient covariance of the new point $X_w$, on the tangent space at $X_v$,
equal to $a/(d-1)$ times the identity. Since $U$ does not depend on $X_w$,
\[
 \mathbb E|\langle\nabla_vU,\nabla_v\theta_{vw}\rangle|^2
 =\frac{a}{d-1}\mathbb E|\nabla_vU|^2.
\]
Summing over all old endpoints $v$ and new endpoints $w$, and using \eqref{eq:leaf-gap}, gives the bound
$na\mathbb Ef^2/[\max(1,l)(d-1)^2]$.

We must also control the sum of different parent coordinates contributing to the same child coordinate.
Each such parent is obtained by deleting a leaf edge other than the output edge; its multiplicity must be one.
If the child has $l'$ leaves, the number $r$ of deletable edges is at most $l'$.
For any of its parents, $l'\le l+1\le2\max(1,l)$.
Apply $|\sum_{i=1}^r u_i|^2\le r\sum_i|u_i|^2$
and reorder the sum by parents.
The new edge was absent from the parent, so the parent and child have the same factorial weight.
The leaf factor cancels the denominator in \eqref{eq:leaf-gap}, giving
\[
 \|F_jf\|^2\le\frac{2na}{(d-1)^2}\|f\|^2
 \le4L^2\frac{nh}{d}\|f\|^2.
\]
This explains the absence of any loss in the expansion order in the first bound.

For internal edges, let $D=\|k'\|_\infty$.
If $g=\{v,w\}$, then
$|D_gf|^2\le2D^2(|\nabla_vU|^2+|\nabla_wU|^2)$.
A parent graph has $j+1$ edge occurrences and at most $j+2$ vertices, so each vertex has at most $j+1$ possible internal neighbors.
Weighted Cauchy--Schwarz at a child coordinate gives
\[
 \left|\sum_{g:\beta_g>0}u_g\right|^2
 \le(j+1)\sum_{g:\beta_g>0}\frac{|u_g|^2}{\beta_g},
 \qquad \frac{\beta!}{\beta_g}=(\beta-1_g)!.
\]
To display the two counting factors, denote a parent coordinate by $u_{e,\alpha}$, its vertex set by $V_{e,\alpha}$,
and let $U_{e,\alpha}=(-\Delta)^{-1}u_{e,\alpha}$.
The factorial identity allows the child sum to be reordered by parents, yielding
\begin{align*}
 \|I_ju\|^2
 &\le (j+1)\sum_{e,\alpha}\alpha!
       \sum_{g\in\binom{V_{e,\alpha}}2}\mathbb E|D_gu_{e,\alpha}|^2\\
 &\le 2D^2(j+1)^2\sum_{e,\alpha}\alpha!
       \mathbb E\sum_{v\in V_{e,\alpha}}|\nabla_vU_{e,\alpha}|^2.
\end{align*}
In the second line, each vertex occurs at most $j+1$ times in the internal-edge sum.
Apply \eqref{eq:leaf-gap} to each centered parent coordinate to obtain
\begin{equation}\label{eq:internal-bound}
 \|I_ju\|^2\le\frac{2D^2(j+1)^2}{d-1}\|u\|^2.
\end{equation}
Fixed-degree polynomial interpolation gives $D\le C_L\|k\|_\infty$.
By $|P_{\ell,d}|\le1$ and Cauchy--Schwarz,
$\|k\|_\infty^2\le(\sum_{\ell=1}^LM_{\ell,d})h\le C_Ld^Lh$.
Combining this with $\|k\|_\infty\le b$ yields
\[
 \frac{D^2}{d-1}\le C_{L,b}\min\{d^{-1},h d^{L-1}\}
 \le C_{L,b}h^{1/L}.
\]
The last step considers $d\ge h^{-1/L}$ and $d<h^{-1/L}$ separately; when $h=0$, the kernel is zero.
Substitution in \eqref{eq:internal-bound} proves the second bound.
\end{proof}

\subsubsection{Mean coefficients and cumulants}
We now identify the constant terms in the recursion, allowing the preceding graph sums to be applied to these arrays.
For the bounded vector $\theta$, the moment generating function $M(t)=\mathbb E e^{t\cdot\theta}$ is analytic near the origin.
One integration by parts with the exponential test function gives
\[
 \partial_e\log M(t)
 =\sum_g t_g\mu_{1,e,1_g}
  +\sum_g t_g\frac{\mathbb E[b_{1,e,1_g}e^{t\cdot\theta}]}{M(t)}.
\]
For any centered coordinate $b_{j,e,\alpha}$, the same identity gives
\[
 \mathbb E[b_{j,e,\alpha}e^{t\cdot\theta}]
 =\sum_g t_g\mathbb E[D_gb_{j,e,\alpha}e^{t\cdot\theta}].
\]
Split each coefficient on the right into its mean and centered part, then collect terms with $\beta=\alpha+1_g$.
This is exactly \eqref{eq:array-recursion}, with each $g$ occurring once.
Iterating to any finite order $r$ gives
\[
 \partial_e\log M(t)
 =\sum_{j=1}^r\sum_{|\alpha|=j}\mu_{j,e,\alpha}t^\alpha
  +\sum_{|\alpha|=r}t^\alpha
       \frac{\mathbb E[b_{r,e,\alpha}e^{t\cdot\theta}]}{M(t)}.
\]
Since $\mathbb Eb_r=0$, the remainder is $O(\|t\|^{r+1})$.
Comparing the finite Taylor coefficients yields
\begin{equation}\label{eq:mu-cumulant}
 \mu_{j,e,\alpha}
 =\frac{\cum(\theta_e,\theta_g:\ g\text{ occurs }\alpha_g\text{ times})}{\alpha!}.
\end{equation}
At every finite order, this identity follows from uniqueness of Taylor coefficients.

For a graph $H$ of total multiplicity $m=j+1$, choosing the output edge $e$ gives
$1/\alpha!=m_e/\prod_fm_f!$. Summing over all output edges,
the contribution of this graph to $\|\mu_j\|^2$ is exactly
$mC_H^2/\prod_fm_f!$.
Thus Lemma~\ref{lem:core-sum} is precisely the fixed-cycle-rank bound for the mean coefficients.

\begin{proposition}\label{prop:array-small}
Suppose $A_3\to0$, $A_4\to0$, and $G\to0$, and take $R=\lceil3\log n\rceil$. For every fixed
$w\ge1$ and $s>0$,
\begin{equation}\label{eq:array-small}
 \sum_{j=2}^Rw^j\|\mu_j\|^2\longrightarrow0,
 \qquad s^{-R}\|b_R\|^2\longrightarrow0.
\end{equation}
\end{proposition}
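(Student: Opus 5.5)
We prove both statements in \eqref{eq:array-small} by expanding $b_j$ and $\mu_j$ along the decomposition \eqref{eq:FI}, sorting terms by the number of internal-edge steps, and applying the two bounds of Lemma~\ref{lem:operators}. Write $\varepsilon_F=C\sqrt{G}$ for the bound on $\|F_j\|$. Write $\varepsilon_I(j)=C(j+1)h^{1/(2L)}$ for the bound on $\|I_j\|$. Since $A_4\to0$, the remark after \eqref{eq:signals} gives $h\le C n^{-2/3}$, hence $h^{1/(2L)}\le Cn^{-1/(3L)}$.

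For $j\le R=O(\log n)$ this gives $\varepsilon_I(j)\le C(\log n)\,n^{-1/(3L)}\to0$ uniformly. Both operator norms are therefore small uniformly over the whole range of orders we use. The projection $P$ has norm at most one. It commutes with the array structure because centering preserves leaf-centering, as noted before \eqref{eq:FI}. Hence $\|b_{j+1}\|\le(\varepsilon_F+\varepsilon_I(j))\|b_j\|$. Starting from $\|b_0\|^2=Nh$, we get
\[
 \|b_R\|^2\le Nh\prod_{j<R}(\varepsilon_F+\varepsilon_I(j))^2\le n^2\,\eta_n^{2R},
\]
where $\eta_n\to0$. With $R=\lceil3\log n\rceil$, we have $n^2\eta_n^{2R}s^{-R}=\exp\{2\log n+R(2\log\eta_n-\log s)\}\to0$, because $\log\eta_n\to-\infty$. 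This proves the second claim. The choice $R\asymp\log n$ is exactly what allows the prefactor $N$ to be beaten.

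For the mean coefficients, note that $\mu_{j+1}=\mathbb E I_jb_j$ and that $|\mathbb E u|$ is controlled by $\|u\|$. A crude bound gives $\|\mu_{j+1}\|\le\varepsilon_I(j)\|b_j\|$. This alone is too weak at low orders, because $\|b_j\|^2$ still carries the factor $Nh\approx n^2h$. We therefore split by cycle rank. Expanding $b_j$ as a sum over words in $F$ and $PI$, each term whose label has cycle rank $t$ has undergone exactly $t$ internal steps before the final one, since the final internal step is the one producing $\mu$. Fix $t_0$ large, depending on $L$ only.

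\emph{Low cycle rank.} The contribution to $\sum_j w^j\|\mu_j\|^2$ from graphs with $t\le t_0$ is, by the identification \eqref{eq:mu-cumulant} and the remark following it, precisely the left side of \eqref{eq:core-sum}. Lemma~\ref{lem:core-sum} bounds it by $C(A_3+A_4^{1/6})\to0$.

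\emph{High cycle rank.} For graphs with $t>t_0$, every contributing word contains at least $t_0+1$ internal factors. Its norm is at most
\[
 \sqrt{Nh}\,\varepsilon_F^{\,j-t}\prod(\text{internal bounds})\le n\,(C\sqrt G)^{j-t}\bigl(C(j+1)h^{1/(2L)}\bigr)^{t}.
\]
There are $\binom{j}{t}$ positions for the internal factors. The terms are not orthogonal, so we sum norms and then square, or use Cauchy--Schwarz with the binomial count.

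Summing over $j\le R$ with the weight $w^j$ yields roughly $n^2 \sum_{t>t_0}(C w R\, h^{1/(2L)})^{2t}(1+C\sqrt{wG})^{2R}$. The factor $(1+C\sqrt{wG})^{2R}$ is bounded provided $G\log n$ stays bounded, or more weakly that $\sqrt G\,R$ does not blow up. If it does not stay bounded, we instead use the smallness of $\varepsilon_F$ to pay for the $F$ steps, with each step costing at most $w\varepsilon_F^2<1$ eventually. Meanwhile $n^2(Rh^{1/(2L)})^{2t_0}\le n^2(\log n)^{2t_0}n^{-2t_0/(3L)}\to0$ once $t_0>3L$.

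The main obstacle is this high-rank step. The difficulty is that the $n^2$ prefactor coming from $\|b_0\|^2=Nh$ (more precisely $n^2 h\le n^{4/3}$) must be absorbed entirely by the internal-step gains $h^{1/(2L)}$ per step. Doing so requires $t_0$ large, depending on $L$. It also requires that the combinatorial count of where internal steps occur, the factor $\binom{j}{t}\le R^t$, is only polylogarithmic. We would check carefully that the operator bounds apply to the restricted subarrays: the decomposition by cycle rank must respect the norm, which holds since the label determines $t$ and different labels are orthogonal coordinates. The binomial loss must be exactly of this order. If the cross-term count proves worse, the fallback is to note that each internal step costs $h^{1/(2L)}\le n^{-1/(3L)}$ against at most $R$ choices, so the loss is still summable.
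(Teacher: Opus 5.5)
Your proposal is correct and follows the paper's proof. The remainder is handled by the uniform contraction $\|b_R\|\le\sqrt{Nh}\,(\varepsilon_F+\bar\varepsilon_I)^R$ with $\bar\varepsilon_I=CRh^{1/(2L)}$ and $R\asymp\log n$; the mean coefficients are split by cycle rank, with the low-rank part handled by \eqref{eq:mu-cumulant} and Lemma~\ref{lem:core-sum}, and the high-rank part by the word bound $\|\mu_{j,t}\|\le\sqrt{Nh}\binom{j-1}{t-1}\varepsilon_F^{\,j-t}\bar\varepsilon_I^{\,t}$, where $t_0>3L$ internal steps beat the polynomial prefactor. In the high-rank sum, your fallback (summing the geometric series in the number of $F$-steps, with ratio $\sqrt{w}\,\varepsilon_F\to0$) is exactly what the paper does, so the detour through $(1+C\sqrt{wG})^{2R}$ is unnecessary, though harmless since that factor is $n^{o(1)}$; also, a rank-$t$ mean coefficient has $t$ internal steps in total, i.e.\ $t-1$ before the final one.
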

\begin{proof}
Let $H_0=\|b_0\|=\sqrt{Nh}\le n$ and set
$f=C\sqrt G$, $i=CRh^{1/(2L)}$.
These bound $F_j$ and $I_j$, respectively, up to order $R$.
Group the arrays by the cycle rank $t$ of their labels, writing $b_{j,t}$.
Adding a new vertex preserves $t$, while an internal edge increases it by one. Induction using \eqref{eq:FI} gives
\[
 \|b_{j,t}\|\le H_0\binom jt f^{j-t}i^t,\qquad
 \|\mu_{j,t}\|\le H_0\binom{j-1}{t-1}f^{j-t}i^t.
\]
The second bound uses the fact that the last step must be an internal edge; taking means and centering are contractions in $L^2$ norm.
In particular, $\|b_R\|\le H_0(f+i)^R$.

Since $h^3\le C\tau$ and eventually $A_4\le1$, we have $h\le Cn^{-2/3}$,
so $f+i\to0$. For fixed $s>0$, eventually $(f+i)^2/s<e^{-1}$,
and hence $s^{-R}\|b_R\|^2\le n^2e^{-R}\to0$.
No prescribed rate of convergence of $G$ to zero is required.

Fix $t_0=3L+1$, and write $\mu_j^{\rm high}$ for the part with $t\ge t_0$.
Summing the binomial bounds first over $j-t$ and then over $t$ gives
\[
 \sum_{j=2}^Rw^{j/2}\|\mu_j^{\rm high}\|
 \le\frac{H_0(\sqrt w\,i)^{t_0}}
              {[1-\sqrt w(f+i)]^{t_0}}
 \longrightarrow0.
\]
More explicitly, with $x=\sqrt w f$ and $y=\sqrt w i$, extending the sum to infinity gives
$H_0y^{t_0}/[(1-x)^{t_0-1}(1-x-y)]$, which is at most the displayed bound.
Its numerator is bounded by
$C n^{1-t_0/(3L)}(\log n)^{t_0}=Cn^{-1/(3L)}(\log n)^{3L+1}\to0$.

For $t<t_0$, \eqref{eq:mu-cumulant} and Lemma~\ref{lem:core-sum} give
\[
 \sum_{j=2}^Rw^j\|\mu_j^{\rm low}\|^2
 \le C_{L,b,t_0,w}(A_3+A_4^{1/6})\longrightarrow0.
\]
Different values of $t$ correspond to different array coordinates, so their squared norms add.
The sum of squared norms of the high-cycle-rank part is at most the square of the sum of its norms.
Combining the two parts proves the first assertion.
\end{proof}

\subsection{From the higher-order expansion to the full graph distribution}\label{sec:entropy}
Small higher-order coefficients do not by themselves imply impossibility.
We first prove a finite-order relative entropy inequality and then use an edgewise random map to return to Bernoulli graphs.
This gives total variation convergence of the full observation distribution, rather than only convergence of finitely many moments or statistics.

\subsubsection{Finite-order Gaussian comparison}
Write $\KL(P\|Q)=\int\log(dP/dQ)\,dP$ for relative entropy and
$\TV(P,Q)=\sup_B|P(B)-Q(B)|$ for total variation.
The finite-order expansion below is related to higher-order Stein kernels \cite{Fathi}.
We first add independent Gaussian noise to obtain a smooth density.
Spherical integration by parts expresses the discrepancy between the conditional mean and the linear Gaussian predictor as a finite Hermite expansion.
This discrepancy determines the difference between the logarithmic gradients of the smoothed and Gaussian densities.
Integrating its squared norm along the noise variance yields a relative entropy bound.
The expansion retains nonzero mean coefficients at every order, allowing the higher moments of the original vector to differ from Gaussian moments.

\begin{lemma}\label{lem:entropy}
Let $Z\sim N(0,I_N)$ be independent of the spherical variables. For every integer $R\ge2$ and $s>0$,
\begin{equation}\label{eq:entropy-bound}
 \KL\bigl(\mathcal L(\theta+\sqrt s Z)\,|\,N(0,(h+s)I_N)\bigr)
 \le \sum_{j=2}^R\frac{\|\mu_j\|^2}{2(j+1)s^{j+1}}
       +\frac{\|b_R\|^2}{2(R+1)s^{R+1}}.
\end{equation}
\end{lemma}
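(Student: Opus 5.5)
The plan is the standard de Bruijn/Fisher-information route along the noise variance. For $u\ge s$, let $Y_u=\theta+\sqrt u\,Z$ with density $p_u$, and let $\gamma_u$ be the density of $N(0,(h+u)I_N)$. Since $\partial_up_u=\tfrac12\Delta p_u$ and $\partial_u\gamma_u=\tfrac12\Delta\gamma_u$, the usual computation gives
\[
 \frac{d}{du}\KL(p_u\|\gamma_u)=-\frac12\,\mathbb E\bigl|\nabla\log p_u(Y_u)-\nabla\log\gamma_u(Y_u)\bigr|^2 .
\]
As $u\to\infty$ the relative entropy tends to zero: $\theta$ is bounded with covariance $hI_N$, and $N$ is fixed. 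Integrating therefore gives
\[
 \KL(p_s\|\gamma_s)=\tfrac12\int_s^\infty\mathbb E|\nabla\log p_u-\nabla\log\gamma_u|^2\,du .
\]
Tweedie's formula identifies the score difference:
\[
 \nabla\log p_u(y)=-\frac{y-\mathbb E[\theta\mid Y_u=y]}{u},\qquad
 \nabla\log\gamma_u(y)=-\frac{y}{h+u}.
\]
Hence the difference equals $u^{-1}\bigl(\mathbb E[\theta\mid Y_u]-\tfrac{h}{h+u}Y_u\bigr)$. This is the discrepancy between the conditional mean and the linear Gaussian predictor, and it is at most $u^{-1}$ times the $L^2$ norm of $\mathbb E[\theta\mid Y_u]-\ell(Y_u)$ for the linear predictor $\ell$.

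To bound that discrepancy by the arrays, I would use the $L^2$ best-approximation property dually. For any test function $\phi(Y_u)$, expand $\mathbb E[\theta_e\phi(\theta+\sqrt uZ)]$ by iterating \eqref{eq:stein-identity} with $F=\phi(\cdot+\sqrt uZ)$, exactly as in the moment-generating-function derivation of \eqref{eq:mu-cumulant}. This gives
\[
 \mathbb E[\theta_e\phi]=\sum_g\mu_{1,e,1_g}\,\mathbb E\partial_g\phi+\sum_{j=2}^{R}\sum_{|\alpha|=j-1}\mu_{j-1,\cdot}\,\mathbb E\partial^\alpha\partial\phi+\text{remainder},
\]
where the remainder is $\sum_{|\alpha|=R}\mathbb E[b_{R,e,\alpha}\partial^\alpha\phi]$, with indexing as in the recursion. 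The first term, $h\,\mathbb E\partial_e\phi$, is reproduced exactly by the Gaussian linear predictor, so it drops out of the difference. Derivatives in $y$ of a function of $Y_u$ are then transferred to the Gaussian part through Gaussian integration by parts in $Z$: $\mathbb E[\partial^\alpha\phi(\theta+\sqrt uZ)\mid\theta]=u^{-|\alpha|/2}\mathbb E[H_\alpha(Z)\phi\mid\theta]$. Each order-$j$ term thus becomes the pairing of $\phi$ with a Hermite polynomial in $Z$ weighted by $u^{-j/2}$, and the remainder becomes the pairing of $\phi$ with $u^{-R/2}\sum_\alpha b_{R,e,\alpha}H_\alpha(Z)$.

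By the Hermite orthogonality $\mathbb E H_\alpha H_\beta=\alpha!\mathbf 1_{\alpha=\beta}$, the conditional-mean discrepancy is a projection onto functions of $Y_u$ of a vector whose squared $L^2$ norm is bounded by $\sum_{j\ge2}\|\mu_j\|^2u^{-j}$ plus a remainder term $\|b_R\|^2u^{-R}$. Here I use the independence of $b_R$ from $Z$ to split the cross terms, and orthogonality between different Hermite orders. I expect the main obstacle to be the exact power and index bookkeeping needed to get $u^{-j}$ and not an extra power. Once that is settled, the integrand is at most $\sum_j\|\mu_j\|^2u^{-j-2}+\|b_R\|^2u^{-R-2}$, and
\[
 \tfrac12\int_s^\infty u^{-j-2}\,du=\frac{1}{2(j+1)s^{j+1}}
\]
yields \eqref{eq:entropy-bound}. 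The regularity needed for the de Bruijn identity and for the limit at infinity is routine, because $\theta$ is bounded and $N$ is finite.
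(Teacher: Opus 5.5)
Your proposal is correct and follows essentially the same route as the paper: the de Bruijn identity along the noise variance, Tweedie's formula for the score, iterated spherical integration by parts turned into a Hermite expansion in $Z$, unconditional Hermite orthogonality (using independence of $Z$ from the spherical variables), and integration of $u^{-j-2}$ from $s$ to $\infty$.

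One small correction: the first-order term does not drop out exactly. The reason is that $\mathbb E[Z\mid Y_u]=(Y_u-m_u)/\sqrt u$ contains $m_u=\mathbb E[\theta\mid Y_u]$ itself. Solving that relation gives $m_u-\frac{h}{u+h}Y_u=\frac{u}{u+h}\,\mathbb E\bigl[\sum_{j=2}^Ru^{-j/2}\mu_j:H_j(Z)+u^{-R/2}b_R:H_R(Z)\,\big|\,Y_u\bigr]$, which is the paper's \eqref{eq:posterior}. Since $u/(u+h)\le1$, your bound survives unchanged. You should also note that the same-order cross term between $\mu_R$ and $b_R$ vanishes because $\mathbb Eb_R=0$.
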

\begin{proof}
All operations are first performed for fixed $N,R$.
Let $Y_u=\theta+\sqrt uZ$ and $m_u=\mathbb E[\theta\mid Y_u]$.
Let $H_\alpha$ be the multivariate Hermite polynomial in the probabilists' normalization, satisfying
$\mathbb E[H_\alpha(Z)H_\beta(Z)]=\alpha!\mathbf1_{\alpha=\beta}$.
For an array $c=(c_{e,\alpha})$, write
$(c:H_j)_e=\sum_{|\alpha|=j}c_{e,\alpha}H_\alpha$.

Set $B_u=\mathbb E[b_1:H_1(Z)\mid Y_u]$.
For a smooth compactly supported test function $\varphi$, apply \eqref{eq:stein-identity} and then Gaussian integration by parts to obtain
\[
 \mathbb E[\theta_e\varphi(Y_u)]
 =u^{-1/2}\mathbb E\left[\sum_g(h\mathbf1_{e=g}+b_{1,e,1_g})
                         Z_g\varphi(Y_u)\right].
\]
Thus $m_u=(h/u)(Y_u-m_u)+B_u/\sqrt u$, or equivalently,
\begin{equation}\label{eq:posterior}
 m_u-\frac{h}{u+h}Y_u=\frac{\sqrt u}{u+h}B_u.
\end{equation}

For any centered coefficient $c$ and $|\alpha|=j$, inserting spherical integration by parts between two Gaussian integrations by parts gives
\begin{align*}
 \mathbb E[cH_\alpha(Z)\varphi(Y_u)]
 &=u^{j/2}\mathbb E[c\,\partial^\alpha\varphi(Y_u)]\\
 &=u^{-1/2}\sum_g\mathbb E[D_gc\,H_{\alpha+1_g}(Z)\varphi(Y_u)].
\end{align*}
These are finitely many integrations. The coefficients are square-integrable and Gaussian polynomials have all moments, so every term is integrable.
Using this identity $R-1$ times in \eqref{eq:array-recursion} gives
\[
 B_u=\mathbb E\left[
       \sum_{j=2}^Ru^{-(j-1)/2}\mu_j:H_j(Z)
       +u^{-(R-1)/2}b_R:H_R(Z)\,\middle|\,Y_u\right].
\]
Conditional expectation is an $L^2$ contraction; unconditional Hermite orthogonality then yields
\begin{equation}\label{eq:B-norm}
 \mathbb E\|B_u\|^2\le
 \sum_{j=2}^Ru^{-(j-1)}\|\mu_j\|^2+u^{-(R-1)}\|b_R\|^2.
\end{equation}
The same-order cross term between $\mu_R$ and $b_R$ vanishes because $\mathbb Eb_R=0$.
This uses unconditional independence of $Z$ and the spherical variables, not independence conditional on $Y_u$.

We now turn \eqref{eq:B-norm} into a relative entropy bound.
Let $p_u$ be the density of $Y_u$, $q_u$ that of $N(0,(u+h)I_N)$, and
$D(u)=\KL(p_u\|q_u)$.
Since $\theta$ is bounded, on every interval $0<s\le u\le t<\infty$,
the density and its derivatives have integrable bounds of Gaussian decay times a polynomial, and
$|\log p_u(y)|\le C(1+\|y\|^2)$.
Differentiation of entropy and integration by parts are therefore valid, with vanishing boundary terms.
The heat equation $\partial_up_u=\Delta p_u/2$ and
$\mathbb E\|Y_u\|^2=N(u+h)$ give
\[
 D'(u)=-\frac12\int|\nabla\log p_u|^2p_u+\frac{N}{2(u+h)}.
\]
Using $\int y\cdot\nabla p_u(y)\,dy=-N$, the right-hand side equals
$-\frac12\mathbb E\|\nabla\log p_u(Y_u)+Y_u/(u+h)\|^2$.
Differentiating the Gaussian mixture density directly gives
$\nabla\log p_u(Y_u)=(m_u-Y_u)/u$, so
\begin{equation}\label{eq:entropy-derivative}
 -D'(u)=\frac{\mathbb E\|m_u-hY_u/(u+h)\|^2}{2u^2}
 =\frac{\mathbb E\|B_u\|^2}{2u(u+h)^2}.
\end{equation}

It remains to determine the endpoint at infinity. Convexity of relative entropy gives
$\KL(p_u\|N(0,uI_N))\le\mathbb E\|\theta\|^2/(2u)=Nh/(2u)$.
Among isotropic Gaussian reference measures, variance $u+h$, which matches the second moment, minimizes cross entropy.
Hence $0\le D(u)\le Nh/(2u)\to0$.
Integrate \eqref{eq:entropy-derivative} to finite $t$, then let $t\to\infty$.
Using $u+h\ge u$ and \eqref{eq:B-norm}, integrate the finite sum term by term via
$\int_s^\infty u^{-j-2}\,du=s^{-j-1}/(j+1)$ to obtain \eqref{eq:entropy-bound}.
\end{proof}

By Proposition~\ref{prop:array-small}, when $A_3,A_4,G\to0$,
the right-hand side of \eqref{eq:entropy-bound} tends to zero.
Indeed, take $w=\max\{1,s^{-1}\}$. The first part is at most
$(2s)^{-1}\sum_{j=2}^Rw^j\|\mu_j\|^2$, and the remainder also vanishes.
Thus, for every fixed $s>0$,
\begin{equation}\label{eq:gaussian-small}
 \KL\bigl(\mathcal L(\theta+\sqrt sZ)\,|\,N(0,(h+s)I_N)\bigr)\longrightarrow0.
\end{equation}

\subsubsection{Comparison of Bernoulli graphs}\label{sec:transfer}
Return to arbitrary edge density $p\in[p_0,1-p_0]$.
Let $W$ be the original connection function, and write
$\eta=W-p$, $\sigma=\sqrt{p(1-p)}$, $\kappa=\eta/\sigma$.
Then $|\eta|\le1-p_0<1$, so the preceding lemma applies directly to $k=\eta$.
Adding subscripts $\eta$ or $\kappa$ to the respective spectral quantities, we have
\begin{equation}\label{eq:rescale}
 h_\eta=\sigma^2h_\kappa,\qquad
 \delta_\eta=\sigma^3\delta_\kappa,\qquad
 \tau_\eta=\sigma^4\tau_\kappa.
\end{equation}
Thus $|\delta|^{2/3}$, $\sqrt\tau$, and $h/d$ are all multiplied by the same factor $\sigma^2$,
which has uniform positive lower and upper bounds.

First consider the transfer of detection. Independently for each observed edge $A_e$, retain it with probability $1/2$,
and otherwise replace it by an independent $\operatorname{Bernoulli}(1-p)$ variable.
The original model becomes a graph with conditional connection probability $(1+\eta_e)/2$,
and the Erd\H{o}s--R\'enyi model becomes independent fair edges.
Total variation cannot increase under a random map, so Proposition~\ref{prop:detect} yields detection from the original observation.

For impossibility, use a different random map.
Take $c=p_0/2$ and $s=2c^2/\pi$, and map a real number $y$ to a Bernoulli edge of probability
$p+c\operatorname{sign}(y)$, randomizing independently across coordinates.
A centered independent Gaussian vector is mapped exactly to $G(n,p)$.
Conditional on the latent points, the output probability for $\eta_e+\sqrt sZ_e$ is
\[
 q(\eta_e),\qquad
 q(t)=p+c\{2\Phi(t/\sqrt s)-1\},
\]
where $\Phi$ is the standard normal distribution function.
The choice of $s$ ensures $q'(0)=1$; in fact,
$q'(t)=e^{-t^2/(2s)}$, and hence
\begin{equation}\label{eq:channel-error}
 |q(t)-p-t|\le\frac{|t|^3}{6s},\qquad c\le q(t)\le1-c.
\end{equation}
For $a\in[0,1]$ and $q\in(0,1)$, the inequality $\log x\le x-1$ gives
\[
\KL(\operatorname{Bernoulli}(a)\|\operatorname{Bernoulli}(q))
\le(a-q)^2/[q(1-q)].
\]
Comparing the two independent-edge models conditional on all latent points, then averaging and applying Pinsker's inequality,
bounds the total variation between the original and mapped graphs by
\[
 C_{p_0}\left(\sum_e\mathbb E|\eta_e|^6\right)^{1/2}
 \le C_{L,p_0}n h_\eta^{3/2}
 \le C_{L,p_0}(n^4\tau_\eta^2)^{1/4}.
\]
The second step uses Lemma~\ref{lem:moments}, and the third uses $h_\eta^3\le C\tau_\eta$.
The estimate remains valid even when $W$ takes the value zero or one at some points, since the denominator comes from $q$.

If all original signals vanish, \eqref{eq:rescale} gives
$A_{3,\eta},A_{4,\eta},G_\eta\to0$.
By \eqref{eq:gaussian-small}, data processing, and the preceding comparison,
\begin{equation}\label{eq:tv-upper}
 \TV(P_{n,d,W},G(n,p))
 \le\sqrt{\frac12\KL\bigl(\mathcal L(\eta+\sqrt sZ)\|N(0,(h_\eta+s)I_N)\bigr)}
       +C_{L,p_0}A_{4,\eta}^{1/4}\longrightarrow0.
\end{equation}
The detection direction has already been proved. Since
$A_3=(n|\delta|^{2/3})^3$, $A_4=(n\sqrt\tau)^4$, and $G=nh/d$,
their simultaneous convergence to zero, or divergence of their maximum, is equivalent to $n/n_*\to0$ or $n/n_*\to\infty$, respectively.
This completes the proof of Theorem~\ref{thm:main}.

\section{Conclusions and further questions}\label{sec:conclusion}
The Mao--Wu--Xu spectral conjecture seeks to describe all geometric information through the cubic trace associated with triangles.
The quadratic cancellation and positive-spectrum quartic families show that general spherical kernel sequences also require four-cycle and geometric scales.
For dense polynomial kernels of fixed degree, Theorem~\ref{thm:main} combines the three into an explicit $n_*$,
and proves impossibility for $n\ll n_*$ and strong detection for $n\gg n_*$.
The central question that remains is how geometric information emerges in the critical regime $n\asymp n_*$, and which statistics can extract it.

\subsection{The critical curve for the positive-spectrum quartic model}
The pure degree-four harmonic kernel provides a concrete model for critical behavior,
with neither spectral sign cancellation nor competition between multiple harmonic levels.
Fix $b\in(0,1)$ and let
\[
 W_d(t)=\frac{1+bP_{4,d}(t)}2,\qquad
 M_d=\dim\mathcal H_{4,d}=\frac{d(d-1)(d+1)(d+6)}{24}.
\]
Parameterize sample size by the geometric signal $g$, taking $n_d(g)=\lfloor gdM_d/b^2\rfloor$.
Then $n\tr K^2/d\to g$, while the triangle and four-cycle signals tend to zero.
This family therefore isolates critical behavior arising from global geometry.

\begin{question}\label{q:critical}
For every fixed $g>0$, does the following limit exist? If so, determine it:
\[
 V_b(g)=\lim_{d\to\infty}
 \TV\bigl(P_{n_d(g),d,W_d},G(n_d(g),1/2)\bigr).
\]
In particular, which $g$ satisfy $V_b(g)=0$, and which satisfy $V_b(g)=1$?
Is there a nonempty interval on which $0<V_b(g)<1$?
\end{question}

The main theorem describes the two regimes $g\to0$ and $g\to\infty$.
Appendix~\ref{app:critical} further proves that, when $b=1/4$, every sufficiently large fixed $g$ already satisfies
$V_b(g)=1$.
We do not determine the behavior at small fixed $g$, nor prove that the critical curve exists or takes only the values zero and one.
Determining $V_b$ would therefore go beyond the order $d^5$:
it would distinguish a gradual accumulation of geometric information from a sudden onset of detectability.
The finite spherical partition argument compares only the number of possible positions with the total signal.
Further progress requires a finer understanding of overlaps between two latent configurations and their effect on the likelihood.

\subsection{How large a polynomial degree is needed to detect global geometry?}
Throughout $d^5\ll n\ll d^{16/3}$, the pure quartic model is strongly detectable,
yet Appendix~\ref{app:projection} shows that every fixed-degree standardized polynomial mean gap tends to zero.
A natural next step is to determine the necessary growth of polynomial degree before conjecturing the difficulty of general algorithms.

Again let $P=P_{n,d,W_d}$, $Q=G(n,1/2)$, and $\Lambda=dP/dQ$,
and let $\Pi_{\le D}$ be the orthogonal projection in $L^2(Q)$ onto adjacency polynomials of degree at most $D$.

\begin{question}\label{q:degree}
Throughout $d^5\ll n\ll d^{16/3}$, determine the range of $D=D(n,d)\to\infty$ for which
\[
 \|\Pi_{\le D}(\Lambda-1)\|_{L^2(Q)}\longrightarrow0.
\]
In particular, does this remain true for $D=c\log n$ with some fixed $c>0$?
At the scale where the norm no longer tends to zero, can one construct tests whose sum of errors tends to zero?
\end{question}

At fixed degree, the compressed cores have bounded size. As $D$ grows, both the number of core types and the moment-comparison constants
enter the estimates, so the finite-type argument in the appendix cannot be applied unchanged.
Controlling growing degrees would also connect this example with the low-degree likelihood ratio method \cite{KWB}.
Interpreting that method as a statement about general algorithmic complexity requires additional assumptions:
the question above is first a precise orthogonal-projection problem, and only then an algorithmic one.
The geometric test in the main text, by contrast, enumerates all region assignments.
It is also natural to ask whether a computationally tractable relaxation using spherical harmonic features can attain the same $d^5$ sample-size scale.

\subsection{Does average degree govern the sparse quartic model?}
Sparsity cannot be handled by simply replacing $p_0$ in the main theorem with a sequence tending to zero.
Work on threshold connections \cite{LMSY,DMSWX} shows that edge density changes the geometric estimates needed for impossibility.
For the polynomial kernels considered here, a concrete starting point is to retain the quartic harmonic structure while lowering the mean edge density.

Fix $b\in(0,1)$, let $p=p_n\to0$, and consider
\[
 W_{n,d}(t)=p\{1+bP_{4,d}(t)\}.
\]
For $p\le(1+b)^{-1}$, this is a valid $[0,1]$-valued connection function with mean exactly $p$.
Its standardized kernel is
$\kappa=b\sqrt{p/(1-p)}P_{4,d}$, so
\[
 \frac{d}{\tr K^2}
 =\frac{dM_{4,d}(1-p)}{b^2p}\asymp\frac{d^5}{p}.
\]
This suggests that the average degree $np$ may play the role of sample size $n$ in the dense setting, but does not establish a sparse theorem.

\begin{question}\label{q:sparse}
For this model, suppose $d\to\infty$, $p\to0$, and $np\to\infty$.
Is it true that
\[
 \TV\bigl(P_{n,d,W_{n,d}},G(n,p)\bigr)\longrightarrow
 \begin{cases}
 0,& np/d^5\longrightarrow0,\\[2pt]
 1,& np/d^5\longrightarrow\infty.
 \end{cases}
\]
If not, which additional sparse structure changes this scale?
\end{question}

The explicit spectrum and absence of cancellation help separate two difficulties:
weak high-dimensional geometric information, and rare structures among few edges that are not controlled by low-order spectral quantities.
Our relative entropy comparison back to Bernoulli graphs uses a fixed positive lower bound on edge density.
Answering Question~\ref{q:sparse} requires a comparison valid as $p\to0$, or a direct analysis of the sparse likelihood.

\subsection{The three-scale principle beyond fixed degree}
Finally, the role of fixed degree should be distinguished from the spectral formulation of the conjecture.
Formula~\eqref{eq:sample-scale} remains meaningful for many nonpolynomial kernels,
but the uniform moment comparison and higher-order remainder bounds currently depend on fixed $L$.
Uniformly analytic connection functions form a controlled class in which to seek an extension.

\begin{question}\label{q:analytic}
Fix $r>1$, $B>0$, and $p_0\in(0,1/2]$.
Let $W_n(t)=\sum_{\ell\ge0}c_{\ell,n}t^\ell$ satisfy
\[
 \sum_{\ell\ge0}|c_{\ell,n}|r^\ell\le B,\qquad
 0\le W_n\le1,\qquad p_0\le\mathbb EW_n(\langle X_1,X_2\rangle)\le1-p_0.
\]
Define $n_*$ by the same formula \eqref{eq:sample-scale}.
Do the two separation conclusions of Theorem~\ref{thm:main} still hold?
\end{question}

Uniform analyticity allows polynomials of degree of order $\log n$ to attain the approximation accuracy needed to compare graph distributions.
This does not automatically extend the main theorem: one must also prove stability of the spectral scales under approximation
and control the constants as the degree grows.
An answer would clarify whether the three-scale formula reflects a special feature of fixed degree
or a broader principle of spherical detection.

\clearpage
\appendix
\section*{Appendices}
The main text establishes three detection scales for spherical polynomial kernels of fixed degree.
Two related questions remain: why can global geometry contain information beyond that of fixed-degree statistics,
and can strong detection still occur when the geometric signal neither vanishes nor diverges?
The two appendices address these questions in turn.

Starting from the orthogonal expansion of the likelihood ratio, Appendix~\ref{app:projection} proves that simultaneous vanishing of the triangle and four-cycle signals is equivalent to vanishing of every fixed-degree nonconstant likelihood ratio projection.
Together with the quartic example, this shows that all fixed-degree standardized polynomial mean gaps can tend to zero while the total variation distance of the full graph distributions tends to one.
Appendix~\ref{app:critical} further studies the same quartic family and proves strong detection when the geometric signal tends to a sufficiently large finite constant.
This shows that the two extreme regimes of the main theorem do not exhaust the behavior at the critical scale, and provides a concrete starting point for the critical-curve question in Section~\ref{sec:conclusion}.

\section{Separation between fixed-degree statistics and the full observation}\label{app:projection}
The quartic example raises a natural question: after triangles and four-cycles fail,
can one use a more complicated polynomial statistic whose degree is still fixed?
We show that the answer is negative in the sense of standardized mean gaps.
Higher fixed-degree likelihood ratio projections do not reveal the third scale, although the full graph distribution remains detectable.

Let $Q=G(n,p)$, $P=P_{n,d,W}$, and $\Lambda=dP/dQ$.
Since $Q$ is positive at every point of the finite graph space, the likelihood ratio always exists.
Write $Z_e=(A_e-p)/\sqrt{p(1-p)}$.
For a simple edge set $F\subseteq\binom{[n]}2$, let $Z_F=\prod_{e\in F}Z_e$, with $Z_\varnothing=1$.
These functions form an orthonormal basis of $L^2(Q)$: the single-edge space is spanned by $1,Z_e$,
and the independent-edge space is the tensor product of these two-dimensional spaces.

Let $\Pi_{\le D}$ be the orthogonal projection onto polynomials in the adjacency variables of degree at most $D$.
Here $D$ is the degree of the observed statistic, distinct from the degree bound $L$ on the connection kernel.
This is the orthogonal projection used in the low-degree likelihood ratio method \cite{KWB}.
Since $A_e^r=A_e$ for $r\ge1$, this space is spanned by $Z_F$ with $|F|\le D$.
Taking conditional expectations over the edges given the positions yields
\begin{equation}\label{eq:projection-identity}
 \|\Pi_{\le D}(\Lambda-1)\|_{L^2(Q)}^2
 =\sum_{1\le|F|\le D}t(F,\kappa)^2,\qquad
 t(F,\kappa)=\mathbb E\prod_{ij\in F}\kappa(\langle X_i,X_j\rangle).
\end{equation}
If $F$ has a leaf, its term vanishes.
The only nonempty simple graphs with at most four edges, no isolated vertices, and no leaves are the triangle and four-cycle. Thus
\begin{equation}\label{eq:projection-four}
 \|\Pi_{\le4}(\Lambda-1)\|_2^2
 =\binom n3\delta^2+3\binom n4\tau^2.
\end{equation}
These coefficients of random geometric graphs are also commonly called Fourier coefficients;
see \cite{BBfourier,BBalgebra} for estimates based on their graph structure.

\begin{proposition}\label{prop:fixed-degree}
Under the assumptions of Theorem~\ref{thm:main}, the following are equivalent:
\begin{enumerate}
 \item $n^3\delta^2+n^4\tau^2\to0$; 
 \item For every fixed integer $D\ge1$, $\|\Pi_{\le D}(\Lambda-1)\|_{L^2(Q)}\to0$.
\end{enumerate}
\end{proposition}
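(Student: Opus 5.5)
The direction (2)$\Rightarrow$(1) is immediate from \eqref{eq:projection-four}: taking $D=4$ gives $\binom n3\delta^2+3\binom n4\tau^2\to0$, which is equivalent to $n^3\delta^2+n^4\tau^2\to0$ up to constants. Here $\delta,\tau$ are the traces of the standardized kernel $\kappa$, which by \eqref{eq:rescale} differ from those of $\eta$ only by bounded factors. The substance is (1)$\Rightarrow$(2), so assume $A_3,A_4\to0$ and fix $D$.

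By \eqref{eq:projection-identity}, I will bound
\[
 \sum_{1\le|F|\le D}t(F,\kappa)^2.
\]
Only finitely many unlabeled simple graphs $F$ have at most $D$ edges. Each has at most $2D$ vertices and admits at most $n^{v(F)}$ labelings, so it suffices to show $n^{v(F)}t(F,\kappa)^2\to0$ for each fixed shape $F$. I first reduce the class of shapes. A leaf kills $t(F,\kappa)$ by integrating over the leaf vertex, and a cut vertex factorizes $t(F,\kappa)$ by the rotation argument of Lemma~\ref{lem:cut}. Hence I may assume $F$ is connected, $2$-connected and simple, with at least three vertices.

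If $F$ is a cycle of length $m$, then $t(F,\kappa)=\tr K^m$, and as in Lemma~\ref{lem:core-sum} I get
\[
 n^m(\tr K^m)^2\le A_4\rho^{m-4}\ (m\ge4),\qquad n^3(\tr K^3)^2=A_3,
\]
with $\rho\le A_4^{1/4}\to0$. Otherwise I suppress degree-two vertices to obtain a core with $v_0$ vertices, $R_0$ direct edges and $p$ paths, all with distinct endpoints. I then apply the estimate \eqref{eq:core-moment}, which for a single moment factor is just H\"older plus Lemma~\ref{lem:moments}, and needs no cumulant partition:
\[
 t(F,\kappa)^2\le C_{L,D}h^{R_0}\tau^p\|K\|_{\mathrm{op}}^{2z}.
\]
Combining this with $h^3\le C\tau$ from \eqref{eq:h-cube} and the counting in \eqref{eq:core-small} gives
\[
 n^{v}t(F,\kappa)^2\le C A_4^{1/6}\rho^z\to0.
\]
This uses only $A_4\le1$ and is uniform over the finitely many shapes. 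Summing over shapes yields (2).

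The main point to check is that \eqref{eq:core-moment} applies verbatim when $C_H$ is replaced by the plain moment $t(F,\kappa)$. This should be easier than the original estimate: there is a single block, each suppressed path integrates to the kernel of $K^r$ evaluated at distinct endpoints, and H\"older over the $B\le 3(t_0-1)$ factors with $t_0=D$ gives the bound. The other point to verify is that the constants from Lemma~\ref{lem:moments} depend only on $L$, $D$ and $p_0$, since $\|\kappa\|_\infty\le(1-p_0)/\sqrt{p_0(1-p_0)}$. Notably, the geometric signal $G$ plays no role here. That is exactly the content of the separation: fixed-degree projections see only cycle signals, whereas the scan test of Lemma~\ref{lem:scan} sees $G$.
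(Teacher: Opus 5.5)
Your proposal is correct and matches the paper's proof essentially step for step. The direction (2)$\Rightarrow$(1) takes $D=4$ in \eqref{eq:projection-four}. For (1)$\Rightarrow$(2) you factorize over components and blocks, using $v(F)\le\sum_B v(B)$, which you leave implicit. You then bound cycles by $A_3$ and $A_4\rho^{m-4}$, and treat the remaining cores by path suppression, H\"older, and $h^3\le C\tau$ to get $n^v t(F,\kappa)^2\le CA_4^{1/6}\rho^z$.
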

\begin{proof}
Taking $D=4$ in the second assertion and using \eqref{eq:projection-four} proves the first.
Conversely, fix $D$. There are only finitely many isomorphism types of simple graphs with at most $D$ edges and no isolated vertices,
and a type with $v$ vertices has at most $n^v$ labelings.
It suffices to prove $n^v t(F,\kappa)^2\to0$ for every such nonempty graph $F$.
Here $h,\delta,\tau$ refer to the standardized kernel $\kappa$;
its supremum is uniformly controlled by $p_0$, so Lemma~\ref{lem:moments} still applies.

Graph moments multiply over connected components. They also multiply over the blocks obtained by splitting a connected graph at its cut vertices:
conditional on a cut vertex, the sides are independent, and rotational invariance makes each conditional graph moment constant.
A bridge is a single-edge block with zero moment.
Otherwise, each block has at least three vertices and no cut vertex, and
the number of vertices in the original graph is at most the sum of the block vertex counts.
It therefore suffices to treat one such block.

If the block is a cycle of length $m$, its moment is $\tr K^m$.
For $m=3$, $n^m t^2=A_3\to0$; for $m\ge4$,
$n^m t^2\le A_4\rho^{m-4}\to0$, where $\rho=n\|K\|_{\mathrm{op}}^2\le A_4^{1/4}$.
If the block is not a cycle, suppress its degree-two paths and use the notation of Lemma~\ref{lem:core-sum}.
Here only a single graph moment is estimated, with no sum over partitions.
Path convolution and H\"older's inequality again give
\[
 t(F,\kappa)^2\le C_{L,D,p_0}h^{R_0}\tau^p\|K\|_{\mathrm{op}}^{2z}.
\]
Each path has two distinct endpoints, for the same reason as in the main proof; core vertices have degree at least three, so
$v_0\le2(R_0+p)/3$.
Using $h^3\le C\tau$, we obtain
\[
 n^v t(F,\kappa)^2
 \le C n^{v_0-2R_0/3-p}A_4^{R_0/6+p/2}\rho^z\longrightarrow0.
\]
The exponent of $n$ is nonpositive and that of $A_4$ is strictly positive.
Multiplying the estimates over the finitely many blocks completes the proof.
\end{proof}

The norm has a direct statistical interpretation. Orthogonal projection and Cauchy--Schwarz give
\[
 \sup_{\substack{\deg T\le D,\ \mathbb E_QT=0\\\mathbb E_QT^2\le1}}
     |\mathbb E_PT-\mathbb E_QT|
 =\|\Pi_{\le D}(\Lambda-1)\|_2.
\]
For the positive-spectrum quartic sequence in \eqref{eq:quartic-gap}, every fixed-degree standardized mean gap therefore tends to zero,
while $\TV(P,Q)\to1$.
This statement does not rule out arbitrary nonlinear processing of polynomial outputs and is not a computational complexity lower bound.
It describes precisely the limitation of polynomial mean tests normalized in $L^2(Q)$.
Related separations between statistical detection and low-degree methods occur in other algebraic graph models \cite{BBalgebra};
the present example realizes such a separation on the sphere with a fixed quartic kernel and a spectrum of one sign.

\section{Strong detection at a finite geometric scale}\label{app:critical}
Theorem~\ref{thm:main} describes the two extremes in which all three signals vanish or at least one diverges.
The region between them cannot simply be treated as a universal finite-signal regime.
Even when the triangle and four-cycle signals vanish, if the geometric signal tends to a sufficiently large finite constant,
total variation can still tend to one.

\begin{proposition}\label{prop:finite-G}
There is a fixed constant $g_0>0$ such that for every fixed $g>g_0$ there is a sequence of positive-spectrum quartic kernels satisfying
\[
 n^3\delta^2\to0,\qquad n^4\tau^2\to0,\qquad nh/d\to g,
 \qquad \TV(P_k,G(n,1/2))\to1.
\]
\end{proposition}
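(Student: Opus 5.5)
We take the quartic family of Section~\ref{sec:structure} with $b=1/4$: $k=bP_{4,d}$, connection probability $(1+k)/2$, and $n=n_d(g)=\lfloor g\,dM/b^2\rfloor$ with $M=M_{4,d}$. By \eqref{eq:quartic-traces}, $h=b^2/M$, so $nh/d\to g$. Since $n\asymp d^5$, we get $A_3=n^3b^6/M^4\asymp d^{15-16}\to0$ and $A_4=n^4b^8/M^6\asymp d^{20-24}\to0$. The cycle conditions therefore hold automatically, and only the claim $\TV\to1$ at finite $g$ needs proof. For this we reuse the scan test of Lemma~\ref{lem:scan}, tracking its constants rather than letting $G\to\infty$.

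\textbf{Step 1: partition.} Apply Lemma~\ref{lem:partition} to obtain a partition with $\log J\le c_Ld$ (here $L=4$) and block approximation $g$ satisfying $\|k-g\|_2^2\le h/32$. The constant $c_L$ depends only on $L$, so it is fixed once $L=4$. It can be made explicit: $\varepsilon$ depends only on $C_L$, the Lipschitz-type bound for degree-four zonal polynomials.

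\textbf{Step 2: revisit the error bound \eqref{eq:scan-error}.} The first term is $\exp\{n\log J-Nh/16\}$. With $N\approx n^2/2$, this equals
\[
\exp\{n(c_Ld-nh/32)(1+o(1))\}=\exp\{nd(c_L-g/32)(1+o(1))\},
\]
which tends to zero once $g>32c_L$. The second term, $C_b/(nh)$, tends to zero because $nh\asymp gd\to\infty$. We still need to check that the probabilistic estimates in Lemma~\ref{lem:scan} used only $nh\to\infty$ and not $G\to\infty$:
\begin{itemize}
\item the concentration of $V$ and $D$, with $\Var V\le Nb^2h$ and $\Var D\le C_bn^3h$, gives failure probability $O(1/(nh))$;
\item conditional Chebyshev for the true-assignment score gives $O(1/(Nh))$.
\end{itemize}
Both hold as stated. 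Hence $g_0=32c_4$ works, since $b$ is fixed at $1/4$ and all constants depend only on $L$ and $b$.

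\textbf{Main obstacle.} The delicate point is the union-bound constant. Lemma~\ref{lem:scan} only gives $n\log J/(Nh)\le C_L/G$, so we must confirm that the implicit constant is uniform in $d$ and does not secretly depend on $n$. The threshold $Nh/16$ and the $1/32$ approximation error are fixed numbers, and $\log J\le d\log(1+2/\varepsilon)$ with $\varepsilon=\varepsilon(L)$. So the exponent is $n\,[d\log(1+2/\varepsilon)-(n-1)h/32]$, which is negative for large $d$ once $g>32\log(1+2/\varepsilon)$.

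Finally, the test's total error tends to zero, so $\TV(P_k,G(n,1/2))\to1$.
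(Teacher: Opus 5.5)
Your proposal is correct and follows the paper's proof essentially step for step. You use the same sequence $k=\tfrac14P_{4,d}$ with $n=\lfloor gdM_{4,d}/b^2\rfloor$, the same trace computations showing $A_3,A_4\to0$, and the scan test of Lemma~\ref{lem:scan} with $\log J\le c_4d$. As in the paper, the union-bound exponent tends to $-\infty$ while $nh\asymp gd\to\infty$.

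The only difference is bookkeeping. You keep $Nh/16=n(n-1)h/32$ exactly and obtain $g_0=32c_4$. The paper uses the cruder bounds $Nh/16\ge n^2h/64$ and $nh\ge gd/2$, and takes $g_0=128c_4$. Strictly, your multiplicative factor $(1+o(1))$ should be an additive $o(1)$ inside $c_4-g/32$, but this is harmless because $g>32c_4$ is fixed.
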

\begin{proof}
Take $k=bP_{4,d}$ with $b=1/4$, as in the main text, let $d\to\infty$, and choose
$n=\lfloor gdM_{4,d}/b^2\rfloor$.
By \eqref{eq:quartic-traces},
\[
 \frac{nh}{d}\to g,\qquad
 n^3\delta^2\sim\frac{g^3d^3}{M_{4,d}}\to0,\qquad
 n^4\tau^2\sim\frac{g^4d^4}{M_{4,d}^2}\to0.
\]
Lemma~\ref{lem:partition} gives $\log J\le c_4d$.
For all sufficiently large $n$, $Nh/16\ge n^2h/64$ and $nh\ge gd/2$, so
\[
 n\log J-Nh/16\le nd(c_4-g/128).
\]
Taking $g_0=128c_4$ makes this exponent tend to minus infinity; also $nh\to\infty$.
Lemma~\ref{lem:scan} then gives strong detection.
\end{proof}

In particular, $n/n_*$ tends to the finite constant $g$, rather than to infinity.
The strong-detection condition in the main theorem is therefore not necessary.
A finer description of the quartic model at $n\asymp d^5$ requires comparing the number of latent configurations with the information in the observation,
rather than merely substituting three spectral quantities into a criterion with a fixed numerical cutoff.

\clearpage
\providecommand{\bysame}{\leavevmode\hbox to3em{\hrulefill}\thinspace}
\providecommand{\MR}{\relax\ifhmode\unskip\space\fi MR }
\providecommand{\MRhref}[2]{  \href{http://www.ams.org/mathscinet-getitem?mr=#1}{#2}
}
\providecommand{\href}[2]{#2}


\begin{thebibliography}{10}

\bibitem{BBcube}
K.~Bangachev and G.~Bresler, \emph{{Detection of $L_\infty$ geometry in random
  geometric graphs: suboptimality of triangles and cluster expansion}},
  Proceedings of Thirty Seventh Conference on Learning Theory, Proceedings of
  Machine Learning Research, vol. 247, PMLR, 2024, pp.~427--497.

\bibitem{BBfourier}
K.~Bangachev and G.~Bresler, \emph{{On the Fourier coefficients of
  high-dimensional random geometric graphs}}, Proceedings of the 56th Annual
  ACM Symposium on Theory of Computing, ACM, 2024, pp.~549--560.

\bibitem{BBalgebra}
K.~Bangachev and G.~Bresler, \emph{{Random algebraic graphs and their
  convergence to Erd{\H{o}}s--R{\'e}nyi}}, Random Structures \& Algorithms
  \textbf{66} (2025), no.~1, article no.~e21276.

\bibitem{BDER}
S.~Bubeck, J.~Ding, R.~Eldan, and M.~Z. R{\'a}cz, \emph{{Testing for
  high-dimensional geometry in random graphs}}, Random Structures \& Algorithms
  \textbf{49} (2016), no.~3, 503--532.

\bibitem{DaiXu}
F.~Dai and Y.~Xu, \emph{{Approximation theory and harmonic analysis on spheres
  and balls}}, Springer Monographs in Mathematics, Springer, New York, 2013.

\bibitem{DMSWX}
H.~Du, C.~Mao, N.~Sun, Y.~Wu, and J.~Xu, \emph{{Resolution of the detection
  threshold conjecture for random geometric graphs in the $d>n$ regime}},
  preprint, 2026, arXiv:2607.02013v1.

\bibitem{Fathi}
M.~Fathi, \emph{{Higher-order Stein kernels for Gaussian approximation}},
  Studia Mathematica \textbf{256} (2021), no.~3, 241--258.

\bibitem{KWB}
D.~Kunisky, A.~S. Wein, and A.~S. Bandeira, \emph{{Notes on computational
  hardness of hypothesis testing: predictions using the low-degree likelihood
  ratio}}, Mathematical Analysis, its Applications and Computation
  (P.~Cerejeiras and M.~Reissig, eds.), Springer Proceedings in Mathematics \&
  Statistics, vol. 385, Springer, Cham, 2022, pp.~1--50.

\bibitem{LMSY}
S.~Liu, S.~Mohanty, T.~Schramm, and E.~Yang, \emph{{Testing thresholds for
  high-dimensional sparse random geometric graphs}}, Proceedings of the 54th
  Annual ACM SIGACT Symposium on Theory of Computing, ACM, 2022, pp.~672--677.

\bibitem{LRlatent}
S.~Liu and M.~Z. R{\'a}cz, \emph{{A probabilistic view of latent space graphs
  and phase transitions}}, Bernoulli \textbf{29} (2023), no.~3, 2417--2441.

\bibitem{MWX}
C.~Mao, Y.~Wu, and J.~Xu, \emph{{Random geometric graphs with smooth kernels:
  sharp detection threshold and a spectral conjecture}}, preprint, 2026,
  arXiv:2602.14998v1.

\end{thebibliography}
\end{document}